\def\ad{\mathop{\rm ad}\nolimits}
\def\im{\mathop{\rm im}\nolimits}
\newfont{\gothic}{eufm10}
   \def\C{{\CC}} \def\CC{{\mathbb{C}}}
                   \def\R{{\RR}} 
        \def\N{{\mathbb{N}}}
        \newtheorem{theorem}{Theorem}[section]
\newtheorem{proposition}[theorem]{Proposition}
\newtheorem{definition1}[theorem]{Definition}
\newenvironment{definition}{\begin{definition1}\rm}{\hfill $\triangle$\end{definition1}}
\newenvironment{proof}{\addvspace\baselineskip\noindent{\it
Proof:}}{\hspace*{\fill}         $\Box$\par\addvspace\baselineskip}
\newenvironment{proofof}[1]{\addvspace\baselineskip\noindent{\it Proof
\bf (of #1)}:}{\hspace*{\fill} $\Box$\par\addvspace\baselineskip}
\newtheorem{remark1}[theorem]{Remark}
\newenvironment{remark}{\begin{remark1}\rm}{\hfill $\triangle$\end{remark1}}
\newtheorem{example1}[theorem]{Example}
\def\barray{\begin{eqnarray*}}             \def\earray{\end{eqnarray*}}
\def\beq{\begin{equation}} \def\eeq{\end{equation}}
\makeatletter \title{Amplified Hopf bifurcations in feed-forward networks}
\author{Bob Rink\thanks{Department of Mathematics, VU University Amsterdam, The Netherlands, 
{\tt b.w.rink@vu.nl}.} \ and Jan Sanders\thanks{Department of Mathematics, VU University Amsterdam, 
The Netherlands, {\tt jan.sanders.a@gmail.com}.}
}
\begin{document}  \hyphenation{boun-da-ry mo-no-dro-my sin-gu-la-ri-ty ma-ni-fold ma-ni-folds 
re-fe-rence se-cond se-ve-ral dia-go-na-lised con-ti-nuous thres-hold re-sul-ting fi-nite-di-men-sio-nal 
ap-proxi-ma-tion pro-per-ties ri-go-rous mo-dels mo-no-to-ni-ci-ty pe-ri-o-di-ci-ties mi-ni-mi-zer 
mi-ni-mi-zers know-ledge ap-proxi-mate pro-per-ty poin-ting eigen-values}
 
\newcommand{\X}{\mathbb{X}}
\def\lieg{\mathfrak{g}}
\def\gl{\mathfrak{gl}}
\def\ad{\mathrm{ad}}
\def\R{\mathbb{R}}

\newcommand{\p}{\partial}
\maketitle
\noindent 
\abstract{\noindent 
In \cite{CCN} the authors developed a method for computing normal forms of dynamical systems with a coupled cell network structure. We now apply this theory to one-parameter families of homogeneous feed-forward chains with $2$-dimensional cells. Our main result is that Hopf bifurcations in such families generically generate branches of periodic solutions with amplitudes growing like $\sim |\lambda|^{\frac{1}{2}},  \sim |\lambda|^\frac{1}{6}, \sim |\lambda|^{\frac{1}{18}}$, etc. Such amplified Hopf branches were previously found in a subclass of feed-forward networks with three cells, first under a normal form assumption \cite{golstew} and later by explicit computations \cite{elmhirst}, \cite{claire2}. We explain here how these bifurcations arise generically in a broader class of feed-forward chains of arbitrary length.
}
\section{Introduction}\label{sec:2}
In this paper we shall consider systems of ordinary differential equations of the following homogeneous feed-forward type:
\begin{align}\label{network}
\begin{array}{lll}
\dot{x}_0 & = & f(x_0,x_0,x_0,\cdots, x_0, x_0; \lambda),\\
\dot{x}_1 & = & f(x_1,x_0,x_0,\cdots, x_0, x_0; \lambda), \\
%\dot{x}_3 & = & f(x_3,x_2,x_1,\cdots, x_1, x_1; \lambda) \\
\ \vdots &\, \,  \vdots & \hspace{1cm}  \vdots  \\
\dot{x}_{n-1} & = & f(x_{n-1},x_{n-2},\cdots, x_0, x_0; \lambda),\\
\dot{x}_n & = & f(x_n\ \ \ ,x_{n-1},\cdots, x_1, x_0; \lambda).
\end{array}
\end{align}
Here $n +1\in \N$ is the length of the feed-forward chain, the state variables $x_0, \ldots, x_n\in V$ are elements of a finite dimensional vector space $V$ and the function $f:V^{n+1}\times\R\to V$ is a parameter dependent response function. We shall assume that $f(0;\lambda)=0$ for all $\lambda$, and hence that equations (\ref{network}) admit a fully synchronous steady state solution $x=(0, \ldots, 0)$ for all values of the parameter. 

We are interested in the periodic solutions that emanate from this synchronous steady state as $\lambda$ varies. In order to find such synchrony breaking bifurcations of periodic solutions, let us denote by $\mathfrak{a}_i=\mathfrak{a}_i(\lambda):= D_if(0;\lambda): V\to V$. Then the linearization matrix of (\ref{network}) at the synchronous equilibrium reads
\begin{align}\label{matrix}
\left( \begin{array}{rcccc} 
\mathfrak{a}_0 + \mathfrak{a}_1+\ldots + \mathfrak{a}_n & 0 & \cdots & 0& 0\\
\mathfrak{a}_1 + \ldots + \mathfrak{a}_n & \mathfrak{a}_0 &\cdots & 0& 0\\
\vdots\   & \vdots & \ddots & \vdots &  \vdots \\
\mathfrak{a}_{n-1}+\mathfrak{a}_n & \mathfrak{a}_{n-2} & \cdots  & \mathfrak{a}_0& 0\\
\mathfrak{a}_n & \mathfrak{a}_{n-1} & \cdots & \mathfrak{a}_1 & \mathfrak{a}_0
\end{array} \right)\, .
\end{align}
This matrix displays a remarkable degeneracy: the eigenvalues of the submatrix $\mathfrak{a}_0$ each occur at least $n$ times as eigenvalues of the linearization matrix (\ref{matrix}). Although such a \(1:1:\cdots:1\) resonance is highly uncommon in differential equations without any special structure, it is generic in the context of our feed-forward network. 

Assume for example that $\dim V=2$ and that the eigenvalues of $\mathfrak{a}_0(\lambda)$ form a complex conjugate pair that crosses the imaginary axis at nonzero speed. Then one may expect a very particular $n$-fold Hopf bifurcation to take place in equations (\ref{network}). Theorem \ref{mainthm} below describes this bifurcation. It is the main result of this paper. %We write \(A(\lambda)\sim B(\lambda)\) if $\lim_{\lambda\to 0}\frac{A(\lambda)}{B(\lambda)}$ exists and is nonzero.

\begin{theorem}\label{mainthm}
%Let \(1<r\leq n\) be a natural number. 
Let $V= \R^2\cong \C$ and $f:V^{n+1}\times \R \to V$ a smooth function. Assume that $f(0;\lambda)=0$ and that $\mathfrak{a}_0(0)=D_0f(0;0): V\to V$ has eigenvalues $\pm i\omega_0\neq 0$. Then under otherwise generic conditions on $f(X; \lambda)$, the local normal form of (\ref{network}) near $(x; \lambda)=(0;0)$ supports a branch of hyperbolic periodic solutions of the form 
$$x_0(t)\equiv 0, x_1(t)= B_1(\lambda) e^{i\omega(\lambda) t}, \ldots, x_n(t)= B_n(\lambda) e^{i\omega(\lambda)t} $$
of frequency \(\omega(\lambda)  = \omega_0 +\mathcal{O}(|\lambda|) \) and amplitudes
\( |B_i(\lambda)| \sim |\lambda|^{\kappa_i}\), where \(\kappa_i:=\frac{1}{2}\frac{1}{ 3^{i-1}}\).
\end{theorem}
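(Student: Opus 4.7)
The strategy is to exploit the triangular structure of (\ref{network}) and solve for the branch cell by cell. Since cell $0$ is autonomous and admits $0$ as a steady state, the ansatz $x_0(t)\equiv 0$ is consistent for all $\lambda$, after which the remaining equations form a cascade in which cell $k$ is forced by $x_1,\ldots,x_{k-1}$ through the couplings $\mathfrak{a}_{k-j}(\lambda)$. The first step is to invoke the normal form construction of \cite{CCN}, which yields coordinates --- preserving the feed-forward structure and built cell-wise --- in which the truncated field commutes with the diagonal $S^1$-action $z_k\mapsto e^{i\phi}z_k$ inherited from the Hopf spectrum of $\mathfrak{a}_0(0)$. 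Identifying $V\cong\C$ so that $\mathfrak{a}_0(\lambda)$ acts as multiplication by $i\omega_0+\mu(\lambda)$ and writing the off-diagonal blocks as $\mathfrak{a}_j(\lambda)z=p_j(\lambda)z+q_j(\lambda)\bar z$, the antiholomorphic parts $q_j\bar z$ are killed by $S^1$-equivariance and cell $k$'s equation (with $z_0=0$) reduces to
\[
\dot z_k=\bigl(i\omega_0+\mu(\lambda)\bigr)z_k+c_k(\lambda)|z_k|^2z_k+\sum_{j=1}^{k-1}\beta_{k,j}(\lambda)z_j+R_k(z_1,\ldots,z_k;\lambda),
\]
where $\beta_{k,j}(0)=p_{k-j}(0)$ and $R_k$ gathers the remaining $S^1$-equivariant monomials of odd total degree $\geq 3$.

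Next I would run an inductive amplitude analysis along the chain. For $k=1$ the equation is the classical Hopf normal form; the generic conditions $\mathrm{Re}\,\mu'(0)\neq 0$ and $\mathrm{Re}\,c_1(0)\neq 0$ produce a hyperbolic branch $z_1(t)=B_1e^{i\omega(\lambda)t}$ with $|B_1|\sim|\lambda|^{1/2}=|\lambda|^{\kappa_1}$. Inductively, assume periodic branches $z_j(t)=B_je^{i\omega t}$ with $|B_j|\sim|\lambda|^{\kappa_j}$ have been constructed for $j<k$ and substitute the co-rotating ansatz $z_k(t)=B_ke^{i\omega(\lambda)t}$ into cell $k$'s equation. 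Rescaling $B_k=|\lambda|^{\kappa_k}\tilde B_k$ with $\kappa_k:=\kappa_{k-1}/3$ balances the cubic self-interaction $c_k|B_k|^2B_k=O(|\lambda|^{3\kappa_k})$ against the resonant linear forcing $\beta_{k,k-1}B_{k-1}=O(|\lambda|^{\kappa_{k-1}})=O(|\lambda|^{3\kappa_k})$, while every other contribution is strictly subdominant: $\mu B_k$ is of order $|\lambda|^{1+\kappa_k}$ with $1+\kappa_k>\kappa_{k-1}$ (since $\kappa_k<1/2$), the lower forcings $\beta_{k,j}z_j$ for $j<k-1$ are of order $|\lambda|^{\kappa_j}$ with $\kappa_j>\kappa_{k-1}$, and every monomial in $R_k$ has anisotropic scaling order $>3\kappa_k$. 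Under the generic conditions $\mathrm{Re}\,c_k(0)\neq 0$ and $\beta_{k,k-1}(0)\neq 0$, the leading algebraic equation $c_k(0)|\tilde B_k|^2\tilde B_k+\beta_{k,k-1}(0)\tilde B_{k-1}(0)=0$ has a unique nonzero solution and the implicit function theorem extends it to a smooth branch $\tilde B_k(\lambda)$; hyperbolicity of the corresponding periodic orbit follows from non-degeneracy of the linearization at this solution in the co-rotating frame.

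The main obstacle is the bookkeeping behind the subdominance claim: one must enumerate every $S^1$-equivariant monomial $z_{i_1}\cdots z_{i_p}\bar z_{j_1}\cdots\bar z_{j_{p-1}}$ compatible with the feed-forward constraint and verify that each, evaluated along the inductively constructed branch, contributes at order strictly greater than $|\lambda|^{3\kappa_k}=|\lambda|^{\kappa_{k-1}}$. This hinges on the geometric decay $\kappa_j=1/(2\cdot 3^{j-1})$, and ultimately reduces to the observation that any monomial other than the linear $z_{k-1}$ and the cubic $|z_k|^2z_k$ either contains a factor $z_j$ or $\bar z_j$ with $j<k-1$ (hence has order $\geq\kappa_j>\kappa_{k-1}$) or has degree $\geq 5$ in $z_k,\bar z_k$ alone (hence order $\geq 5\kappa_k>3\kappa_k$). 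A secondary but standard task is the persistence of the hyperbolic branch from the truncated normal form to the full system, which follows from a finite-order truncation together with the implicit function theorem applied to a Poincar\'e return map. The crossing speed $\mathrm{Re}\,\mu'(0)\neq 0$, the cubic non-degeneracies $\mathrm{Re}\,c_k(0)\neq 0$, and the linear non-vanishings $\beta_{k,k-1}(0)\neq 0$ together constitute the ``otherwise generic conditions'' of the statement.
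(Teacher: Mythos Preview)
Your approach is essentially the paper's: pass to the $S^1$-equivariant network normal form of \cite{CCN}, set $x_0\equiv 0$, and solve the cascade cell by cell by balancing the cubic self-interaction $C|B_k|^2B_k$ against the resonant linear forcing $\beta B_{k-1}$ from the preceding cell, applying the implicit function theorem at each step. Two small points deserve tightening.

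First, because the chain is \emph{homogeneous} --- every cell uses the same response function $f$ --- your coefficients $c_k$ and $\beta_{k,k-1}$ are in fact independent of $k$; they are a single $C$ and a single $\beta$. Consequently your list of generic conditions collapses: the paper requires only $\mathrm{Re}\,\mu'(0)\neq 0$, $\mathrm{Re}\,C\neq 0$, $\mathrm{tr}\,\mathfrak{a}_1(0)\neq 0$ (which gives $\beta\neq 0$), and invertibility of $\sum_j\mathfrak{a}_j(0)$.

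Second, your subdominance dichotomy is incomplete. You claim every monomial other than $z_{k-1}$ and $|z_k|^2z_k$ either contains a factor with index $<k-1$ or has degree $\geq 5$ in $z_k,\bar z_k$ alone. But the mixed cubics built only from indices $k$ and $k-1$ --- for instance $|z_k|^2z_{k-1}$, $z_k^2\bar z_{k-1}$, $|z_{k-1}|^2z_k$, or $|z_{k-1}|^2z_{k-1}$ --- fall through both cases. Each of these still has order strictly exceeding $\kappa_{k-1}$ (e.g.\ $2\kappa_k+\kappa_{k-1}>\kappa_{k-1}$ and $3\kappa_{k-1}>\kappa_{k-1}$), so your conclusion survives, but the case split as written has a gap.
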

The peculiar Hopf branch described in Theorem \ref{mainthm} has an $x_1$-component with amplitude growing at the rate $\sim |\lambda|^{\frac{1}{2}}$ of the ordinary Hopf bifurcation. The amplitude growth of its $x_2, x_3, \ldots$-components at rates $\sim |\lambda|^{\frac{1}{6}}, \sim |\lambda|^{\frac{1}{18}}, \ldots$ is much faster though. One could say that the feed-forward chain acts as an amplifier and it has been conjectured that this is why feed-forward chains occur so often as {\it motifs} in larger networks \cite{claire}.

Due to their hyperbolicity, the periodic solutions of Theorem \ref{mainthm} persist as true solutions of (\ref{network}). We also remark that the branch of periodic solutions given in Theorem \ref{mainthm} implies that there exist many more branches of periodic solutions.
This is because equations (\ref{network}) admit a symmetry: whenever $t\mapsto (x_0(t), \ldots, x_n(t))$ is a solution to (\ref{network}), then so is
$$t\mapsto (x_0(t), x_0(t), x_1(t), \ldots, x_{n-1}(t))\, .$$
As a result, the Hopf bifurcation of Theorem \ref{mainthm} generates a total of $n$ branches of periodic solutions (counted excluding the fully synchronous branch $x(t)\equiv 0$), given by
$$x_0(t) = \ldots = x_{r-1}(t) \equiv 0, x_{r}(t) = B_1(\lambda)e^{i\omega(\lambda)t}, \ldots, x_{n}(t)=B_{n-r+1}(\lambda)e^{i\omega(\lambda)t}\, . $$ 
Each of these branches emanates from the bifurcation point $(x;\lambda)=(0;0)$ and they all have a different synchrony type. We will see that only the branch described explicitly in Theorem \ref{mainthm} (the one with the largest amplitude and the least synchrony) can possibly be stable.

% In particular, when $\mathfrak{a}_1$ has a pair of purely imaginary eigenvalues, then the matrix $L$ has $n-1$ such pairs, all of which are equal. 

Dynamical systems with a coupled cell network structure have attracted much attention in recent years, most notably in the work of Field, Golubitsky and Stewart and coworkers. For a general overview of this theory, we refer to \cite{field}, \cite{curious}, \cite{golstew}, \cite{stewartnature} and references therein. It has been noticed by many people that networks may robustly exhibit nontrivial synchronized dynamics \cite{antonelli}, \cite{anto1}, \cite{anto2}, \cite{torok}, \cite{stewart1}, \cite{pivato}, \cite{parker}. Synchronous solutions may moreover undergo bifurcations with quite unusual features. Such synchrony breaking bifurcations have for example been studied in \cite{bifurcations}, \cite{anto4}, \cite{jeroen}, \cite{dias}, \cite{synbreak2} and \cite{synbreak}.

The anomalous Hopf bifurcation of Theorem \ref{mainthm} has also been described in the literature on coupled cell networks \cite{elmhirst}, \cite{claire2}, \cite{golstew}, namely in the context of equations of the form
\begin{align}
\begin{array}{lll}\label{network3}     
\dot x_0 & = &  g(x_0, x_0; \lambda)\, ,\\
\dot x_1 & = &  g(x_1, x_0; \lambda)\, ,\\
\dot x_2 & = &  g(x_2, x_1; \lambda)\, .
\end{array}
\end{align}
Note that (\ref{network3}) arises as a special case of (\ref{network}), with $n=2$ and $f:V^3\to V$ of the form 
$$f(X_0, X_1, X_2) :=g(X_0, X_1)\, .$$ 
%In fact, the following result can be found in \cite{elmhirst}, \cite{claire2} and \cite{golstew}. 
%\begin{theorem}\label{mainthm3}
%Let $V= \R^2\cong \C$ and $g:V^2\times \R \to V$. Consider the differential equations 
%for $x_1, x_2, x_3\in V$. Assume that $g(0;\lambda)=0$ and that $\mathfrak{a}_1(0)=D_1g(0;0): V\to V$ has eigenvalues $\pm i$. Then under otherwise generic conditions on $g(X; \lambda)$, the local normal form of (\ref{network}) near $(x; \lambda)=(0;0)$ supports a branch of hyperbolic periodic solutions of the form $$x_1(t) = 0, x_2(t) = B_2(\lambda) e^{i\omega(\lambda)t}, x_3(t) = B_3(\lambda) e^{i\omega(\lambda)t}$$ with frequency $\omega(\lambda) = 1+\mathcal{O}(\lambda)$ and amplitudes $|B_2(\lambda)|\sim \lambda^{\frac{1}{2}}\ \mbox{and}\  |B_3(\lambda)|\sim \lambda^{\frac{1}{6}}$.
%\end{theorem}
In fact, the equivalent of Theorem \ref{mainthm} for equations of the form (\ref{network3}) was first proved in \cite{golstew} under a normal form assumption.  More precisely, it is assumed in \cite{golstew} that the right hand side of (\ref{network3}) commutes with the symmetry
$(x_0, x_1, x_2) \mapsto (x_0, e^{is}x_1, e^{is}x_2)$
and hence that $g(X; \lambda)$ has the symmetry
\begin{align}\label{invarianceg}
g(e^{is}X_0,e^{is}X_1; \lambda)=e^{is}g(X_0, X_1; \lambda)\ \mbox{for}\ s\in \R \, .
\end{align}
With this invariance, the periodic solutions of (\ref{network3}) become relative equilibria, which makes them computable. We remark that a generic $g(X;\lambda)$ of course does not satisfy (\ref{invarianceg}), but the existence of the anomalous Hopf branch was later confirmed for a generic $g(X;\lambda)$ by an explicit computation of the normal form of (\ref{network3}) in \cite{elmhirst}. Finally, with the use of center manifold reduction, an alternative and more efficient proof of the same result was given in \cite{claire2}. For similar normal form computations of other networks, we refer to \cite{krupa}.

Theorem \ref{mainthm} is thus an extension of the results in \cite{elmhirst}, \cite{claire2} and  \cite{golstew}, valid for a more general class of feed-forward chains of arbitrary length. More importantly, our proof of Theorem \ref{mainthm} is in the spirit of a generic bifurcation theory for coupled cell networks. It makes use of the theory developed by the authors in \cite{CCN} that explains how to compute the local normal form of a coupled cell network. 

In fact, we show in \cite{CCN} that any network architecture admits a natural Lie algebra that captures the structure and invariants of the dynamical systems with this architecture. %This Lie algebra can be used to define and compute the local normal form of the network. 
This structure is that of a so-called ``semigroup network'' and the main result in \cite{CCN} is that the normal form of a semigroup network is a semigroup network as well. It turns out that
equations (\ref{network}) form an example of a semigroup network, and hence the normal form of (\ref{network}) near a steady state is again of the form (\ref{network}). This proves it justified to assume that equations (\ref{network}) are in normal form, and hence that $f(X;\lambda)$ satisfies
\begin{align}\label{invariancef}
f(e^{is}X_0,\ldots, e^{is}X_{n-1}, 0; \lambda)=e^{is}f(X_0, \ldots, X_{n-1}, 0; \lambda)\, .
\end{align}
Using the latter invariance, the analysis of the periodic solutions of (\ref{network}) is similar to the analysis in \cite{golstew}. This analysis eventually leads to Theorem \ref{mainthm}. It will also become clear that it is not reasonable to assume that the normal form of (\ref{network3}) is of the form (\ref{network3}), because equations (\ref{network3}) do not form a semigroup network. %Instead, the normal form of (\ref{network3}) is of the form (\ref{network}). On the other hand, we will show that when $f(X_0, \ldots, X_n;\lambda)=g(X_0, X_1;\lambda)$ and $g(X;\lambda)$ satisfies generic conditions, then $f(X;\lambda)$ satisfies the generic conditions necessary for Theorem \ref{mainthm}. This means that the results of Theorem \ref{mainthm} remain true for generic feed-forward chains of the restricted form
%$$\dot x_0= g(x_0, x_0; \lambda), \, \dot x_1 = g(x_1, x_0; \lambda), \ldots, \dot x_{n-1} = g(x_{n-1}, x_{n-2}; \lambda), \dot x_n = g(x_n, x_{n-1}; \lambda)\, .$$
%drop that assumption, only to show that we can always put the system in normal form so that the 
%assumption is satisfied if the linear flow of the self-interaction term is a nonsemisimple \(1:1:\cdots:1\) resonance (among \(n-1\) cells) at bifurcation.

This paper is organized as follows. In Section \ref{sec:0}, we illustrate the amplifying character of our feed-forward chain by studying a synchrony breaking steady state bifurcation in case $\dim V =1$. This helps to fix ideas, and it yields an extension of some results obtained in \cite{CCN}. In Section \ref{sec:01}, we recall some results from \cite{CCN} on semigroup coupled cell networks. 
% In this paper we compute in some detail an example where the dimension is two.
Sections \ref{sec:1} and \ref{semigroupringsection} are devoted to the algebra of linear feed-forward networks. The goal of these sections is to obtain, by means of linear normal form theory,
 a decomposition in semisimple and nilpotent part of the linearization matrix (\ref{matrix}). This is an essential preparatory step for the computation of a nonlinear normal form. We provide a linear normal form theorem in Section \ref{sec:1} and prove it in Section \ref{semigroupringsection}, using the concept of a semigroup ring. %Section \ref{sec:1} starts less abstractly though, when we write out the the linear analysis for $n=2$ in great detail, as an example.
%The construction is essentially the same as for $n=3$, but a higher abstraction level simplifies our calculations.
% We start Section \ref{sec:2} with This may sound very simple, but this analysis is complicated
%by the fact that the coefficients in our matrices are not numbers, but matrices themselves. For this reason we write out our computations in detail. 
Finally, in Section \ref{sec:4} we use the \(SN\)-decomposition obtained in Sections \ref{sec:1} and \ref{semigroupringsection} to compute the nonlinear normal form of (\ref{network}) under the assumptions of Theorem \ref{mainthm}. A singularity analysis then leads to the proof of this theorem.

%The normal form procedure in the context of cellular networks is explained in \cite{CCN}.

\section{An amplified steady state bifurcation}\label{sec:0}
In this section we describe a synchrony breaking steady state bifurcation in the feed-forward network (\ref{network}) that confirms its amplifying character. This section is meant as an illustration before we start the actual proof of Theorem \ref{mainthm}. Most notably, we do not make use of normal forms in this section yet.

We first of all remark that when $f(0; 0) = 0$, then equations (\ref{network}) admit a fully synchronous steady state $x=(0, \ldots, 0)$ at the parameter value $\lambda=0$. This steady state persists to a nearby synchronous steady state $(s(\lambda), \ldots, s(\lambda))$ for $\lambda$ close to $0$ under the condition that 
$$\left. \frac{d}{ds}\right|_{s=0}\!\!\! f(s, \ldots, s; 0)=\mathfrak{a}_0(0) + \ldots + \mathfrak{a}_n(0) \ \mbox{is invertible}.$$ 
Throughout this paper, we will assume that this condition is met, so that by translating to the synchronous equilibrium we justify our assumption in the introduction that $f(0; \lambda)=0$. 

We are interested in branches of solutions that emanate from $(x; \lambda)=(0;0)$. These are defined as follows:
\begin{definition}
A {\it branch of steady states} near $(x;\lambda)=(0;0)$ is the germ of a continuous map $x: [0, \lambda_0)\to V^{n+1}$ or $x:(-\lambda_0, 0]\to V^{n+1}$ with the properties that $x(0)=0$ and such that each $x(\lambda)$ is a steady state of the differential equations at the parameter value $\lambda$. 
\end{definition}
To characterize the growth of a branch of steady states, we introduce a special symbol:
\begin{definition}
For a branch $x(\lambda)$ of steady states we write $x(\lambda)\sim |\lambda|^{\kappa}$, with $\kappa>0$, if there is a smooth function $x^*:[0, |\lambda_0|^{\kappa}) \to V^{n+1}$ such that $$x(\lambda) = x^*(|\lambda|^{\kappa})\ \mbox{and}\ \lim_{|\lambda|^{\kappa}\downarrow 0} \frac{x^*(|\lambda|^{\kappa})}{|\lambda|^{\kappa}} \neq 0\, .$$
\end{definition}
The following theorem describes the branches of steady states that can bifurcate in (\ref{network}) in the case that $\dim V=1$. It is a generalization of results in \cite{CCN}, where it was assumed that $n=2$. We spell out the proof of Theorem \ref{mainthm1} in great detail and hope that this makes the structure of the proof of Theorem \ref{mainthm} more understandable.
\begin{theorem}\label{mainthm1}
%Let \(1<r\leq n\) be a natural number. 
Let $V= \R$ and $f:V^{n+1}\times \R \to V$ a smooth function. Assume that $f(0;\lambda)=0$ and that $\mathfrak{a}_0(0)=0$. Furthermore, assume the following generic conditions on $f(X; \lambda)$:
\begin{itemize}
\item[1.] $\mathfrak{a}_0(0)+\ldots +\mathfrak{a}_n(0)\neq 0$.
\item[2.] $\frac{d\mathfrak{a}_0}{d\lambda}(0)\neq 0$.
\item[3.] $\mathfrak{a}_1(0)\neq 0$.
\item[4.] $\frac{\partial^2 f}{dX_0^2}(0;0)\neq 0$.
\end{itemize}
Then equations (\ref{network}) support $2n$ branches of hyperbolic steady states (counted excluding the trivial steady state $x = 0$) near $(x; \lambda)=(0;0)$. More precisely, if we define $\kappa_i:=\frac{1}{ 2^{i-1}}$ for $i=1,\ldots,n$, then for each $r=1, \ldots, n$ there are two steady state branches with asymptotics
$$x_0= \ldots = x_{r-1}= 0, x_{r} \sim |\lambda|^{\kappa_1}, \ldots, x_n\sim |\lambda|^{\kappa_{n-r+1}}\, .$$ 
\end{theorem}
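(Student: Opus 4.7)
The plan is to exploit the triangular structure of the feed-forward chain and solve the steady-state equations cell by cell. First I would observe that the cell-$0$ equation at a steady state reads $f(x_0,x_0,\ldots,x_0;\lambda)=0$, whose derivative in $x_0$ at the origin equals $\sum_{i=0}^n \mathfrak{a}_i(0)\neq 0$ by condition~1. Since $f(0;\lambda)\equiv 0$, the implicit function theorem forces $x_0=0$ on any branch emanating from the origin. Looking for nontrivial branches, I would then let $r\in\{1,\ldots,n\}$ be the smallest index with $x_r\neq 0$, so that $x_0=\ldots=x_{r-1}=0$ and the remaining steady-state equations reduce to the triangular system $f(x_i,x_{i-1},\ldots,x_r,0,\ldots,0;\lambda)=0$ for $i=r,\ldots,n$, in the $n-r+1$ unknowns $x_r,\ldots,x_n$.

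Writing $f_{00}:=\tfrac{\partial^2 f}{\partial X_0^2}(0;0)$, the cell-$r$ equation, after dividing out the factor of $x_r$ that $f(0;\lambda)\equiv 0$ makes available, reduces to $\mathfrak{a}_0'(0)\lambda+\tfrac12 f_{00}\,x_r+O(\lambda^2,\lambda x_r,x_r^2)=0$. By conditions~2 and~4 the implicit function theorem yields a unique smooth nontrivial solution $x_r=-2\mathfrak{a}_0'(0)f_{00}^{-1}\lambda+O(\lambda^2)$, so $x_r\sim|\lambda|^{\kappa_1}$. Inductively, once $x_r,\ldots,x_{r+k-1}$ have been constructed with $x_{r+j}\sim|\lambda|^{\kappa_{j+1}}$, I would analyze cell $r+k$ by the scaling $x_{r+k}=|\lambda|^{\kappa_{k+1}}\xi_k$. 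Tracking the orders of magnitude of each monomial in $f(x_{r+k},x_{r+k-1},\ldots,x_r,0,\ldots,0;\lambda)$, one checks that the leading order is $|\lambda|^{\kappa_k}$, attained only by $\mathfrak{a}_1(0)\,x_{r+k-1}$ and $\tfrac12 f_{00}\,x_{r+k}^2$; all other linear, quadratic, and cubic terms are strictly of smaller magnitude. By conditions~3 and~4 the leading-order equation $\mathfrak{a}_1(0)\,x_{r+k-1}+\tfrac12 f_{00}\,x_{r+k}^2=0$ is nondegenerate in $\xi_k$, and a further application of the implicit function theorem produces smooth branches with $x_{r+k}\sim|\lambda|^{\kappa_{k+1}}$.

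Each of these quadratics admits two sign choices for $x_{r+k}$, but only the sign of $x_{r+k-1}$ that makes $x_{r+k}^2$ positive yields real roots; this sign is fixed by the signs of $\mathfrak{a}_1(0)$ and $f_{00}$, so the signs of $x_{r+1},\ldots,x_{n-1}$ are uniquely determined. At the last cell $x_n$ no subsequent equation imposes a constraint, so both signs of $x_n$ produce valid branches; hence there are exactly two branches for each $r$, giving $2n$ in total. Hyperbolicity follows from the fact that the Jacobian of the full vector field at any such steady state is lower triangular, with diagonal entries $\mathfrak{a}_0(\lambda)$ in rows $0,\ldots,r-1$ and $\mathfrak{a}_0(\lambda)+f_{00}x_i+\cdots$ in rows $i\geq r$: for small $\lambda\neq 0$ either the $\mathfrak{a}_0'(0)\lambda$ term (condition~2) or the $f_{00}x_i$ term (condition~4) dominates and is nonzero, so no diagonal entry vanishes. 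The main technical obstacle I foresee is the bookkeeping at each rescaling step: verifying that after substituting $x_{r+k}=|\lambda|^{\kappa_{k+1}}\xi_k$ the residual equation is a smooth function of a suitable fractional power of $|\lambda|$ together with the $\xi_j$, so that the implicit function theorem applies, and tracking carefully on which half-line of $\lambda$ each resulting one-sided branch lives.
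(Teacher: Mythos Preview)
Your proposal is correct and follows the same route as the paper: exploit the lower-triangular structure to force $x_0=0$, divide out $x_r$ to obtain the transcritical branch $x_r\sim|\lambda|$, then proceed inductively through the rescaled quadratic equations $\mathfrak{a}_1(0)x_{r+k-1}+\tfrac12 f_{00}x_{r+k}^2+\cdots=0$, with a sign constraint propagating until the free choice at $x_n$; hyperbolicity via the triangular Jacobian is also the paper's argument.

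Two small slips worth correcting. First, in the hyperbolicity paragraph the diagonal entry in row $0$ is $\sum_i\mathfrak{a}_i(\lambda)$, not $\mathfrak{a}_0(\lambda)$; its nonvanishing is condition~1, not condition~2. Second, your counting of the two branches for $r=n$ is off: there is no subsequent quadratic, only the single linear-in-$x_n$ equation, so the two branches for $r=n$ come from the two signs of $\lambda$, not from two square roots. Relatedly, for $r<n$ it is the constraint at step $k=1$ that fixes the sign of $\lambda$ (since $x_r$ itself has a determined sign once $\lambda$ is chosen), after which the sign constraints you describe select one root at each intermediate step. The paper makes this explicit by carrying, as part of its induction hypothesis, the sign of $\lim x_{r+k}/|\lambda|^{\kappa_{k+1}}$ relative to $\mathfrak{a}_1(0)/f_{00}$.
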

%\begin{theorem}[Equilibria for the semigroup $\langle 112\ldots n-1 \rangle_0$, $\dim V=1$]
%Let \(1<r\leq n\) be a natural number. Let \(\kappa_i=\frac{1}{ 2^{i-r}}, i=r,r+1,\ldots,n\).
%Then the normal form equation of a \(\langle 112\cdots n-1\rangle_0\) system allows for a stationary solution
%\(x_1=\cdots=x_{r-1}=0, x_i=O_\#(\lambda^{\kappa_i}), i\geq r\)
%under some nondegeneray conditions.
%\end{theorem}
\begin{proof}
%The normal form of (\ref{network}) may be assumed to commute with the flow of the semisimple part
%$$\left( \begin{array}{c}
%x_1\\ \vdots \\
%x_n \end{array}\right) \mapsto  \left( \begin{array}{cccc}
%\mathfrak{a}_1(0)+\ldots+\mathfrak{a}_n(0) & 0 & \hdots & 0 \\  \vdots & \vdots &\ddots & \vdots \\
%\mathfrak{a}_1(0)+\ldots+\mathfrak{a}_n(0) & 0 & \hdots &0  \end{array}\right) \left( \begin{array}{c}
%x_1\\ \vdots \\
%x_n \end{array}\right) \, .$$
%But this implies that 
%$$f(x_1, \ldots, x_1; \lambda) = x_1 \sum_{j=1}^n D_jf(x_1, \ldots, x_n; \lambda) $$
%$$(x_1, \ldots, x_n)\mapsto (e^{(\mathfrak{a}_1(0)+\ldots +\mathfrak{a}_n(0))s}x_1, x_2, \ldots, x_n)$$
%This implies in particular that 
%\begin{align}\label{invariance}
%f(x_n, \ldots, x_1)=f(x_n, \ldots, e^{(\mathfrak{a}_1(0)+\ldots +\mathfrak{a}_n(0))s}x_1)\ \mbox{for}\ s\in \R\, .
%\end{align}
%Differentiating (\ref{invariance}) with respect to $s$ at $x_1=0$ now gives that
%$$D_nf(X_1, \ldots, X_{n-1}, 0)\equiv 0\, .$$
Let us write $\mathfrak{a}_0'(0)=\frac{d\mathfrak{a}_0}{d\lambda}(0)$. We will give the proof of the theorem under the assumption that $\mathfrak{a}_0'(0)\mathfrak{a}_1(0)>0$. The proof is similar when $\mathfrak{a}_0'(0)\mathfrak{a}_1(0)<0$ but it is inconvenient to give both proofs simultaneously. This technical problem will not occur in the proof of Theorem \ref{mainthm}.

Because $\mathfrak{a}_0(0)+\ldots + \mathfrak{a}_n(0)\neq 0$ we can assume that $f(0;\lambda)=0$. Together with the remaining assumptions of the theorem, this implies that we can Taylor expand 
\begin{align}
 f(X;\lambda) = & \mathfrak{a}_0'(0) \lambda X_0 + \mathfrak{a}_1(0)X_1+ \ldots+ \mathfrak{a}_n(0)X_n + C X_0^2  \nonumber \\ \label{expansion} & +  \mathcal{O} \left(|X_0|^3 + |X_0|^2|\lambda| + |X_0||\lambda|^2 + ||(X_1, \ldots, X_n)||^2 \right. \\ \nonumber & + \left.||(X_1, \ldots, X_n)||\cdot |\lambda| + |X_0|\cdot||(X_1, \ldots, X_n)||\right)\, ,
\end{align}
in which $\mathfrak{a}_0'(0)\neq0$, $\mathfrak{a}_1(0)\neq 0$, $\mathfrak{a}_1(0)+\ldots + \mathfrak{a}_n(0)\neq 0$ and $C:=\frac{\partial^2f}{\partial X_0^2}(0;0) \neq 0$. % and where $g(X; \lambda)$ is a smooth function for which $g(0; 0)=0$ and
%$$g(X;\lambda)=$$
With this in mind, we will solve for steady states by consecutively solving for $x_j$ the equations 
\begin{align}\nonumber
 f(x_j, x_{j-1}, \ldots, x_1, x_0, \ldots, x_0; \lambda)=  0 \ \mbox{for}\ j=0,\ldots, n \, .
% \mathfrak{a}_0(\lambda)x_j +  \mathfrak{a}_1(\lambda)& x_{j-1}  + \ldots  + \mathfrak{a}_n(\lambda)x_0 + C(\lambda)x_j^2 + g(x_j, \ldots, x_0, \ldots, x_0; \lambda) \, . \nonumber
\end{align} 
First of all, since $\mathfrak{a}_1(0)+\ldots + \mathfrak{a}_n(0)\neq 0$, it holds by (\ref{expansion}) that 
$$f(x_0, \ldots, x_0; \lambda)=(\mathfrak{a}_1(0)+\ldots+\mathfrak{a}_n(0))x_0+\mathcal{O}(|x_0|^2+|\lambda| |x_0|)\, .$$ 
We conclude that $x_0=x_0^*:=0$ must hold for a steady state. 
 
In fact, it is clear for any $1\leq r \leq n$ that $x_0=x_0^*=0, \ldots, x_{r-1}=x_{r-1}^*:=0$ provide solutions to the equations $f(x_0, \ldots, x_0; \lambda)=0, \ldots, f(x_{r-1}, \ldots, x_0; \lambda)=0$.  Given these trivial solutions, let us try to find the possible steady state values of $x_r$ by solving 
\begin{align}\label{x1eq}
f(x_r, x_{r-1}^*, \ldots, x_0^*; \lambda) = \mathfrak{a}_0'(0)\lambda x_r + C x_r^2+ \mathcal{O}(|x_r|^3 + |x_r|^2|\lambda| + |x_r|\cdot |\lambda|^2)=0\, .
\end{align} 
Not surprisingly, one solution to this equation is given by $x_r=0$, but let us ignore this solution. Thus, we divide equation (\ref{x1eq}) by $x_r$ and consider the case that $x_r$ solves 
$$\mathfrak{a}_0'(0)\lambda+C x_r + \mathcal{O}(|x_r|^2 + |x_r|\cdot |\lambda| + |\lambda|^2)=0\, .$$ 
Because $C \neq 0$, the implicit function theorem guarantees a unique solution $x_r$ to this latter equation, given by
$$x_r = -\frac{\mathfrak{a}_0'(0)}{C}\lambda +\mathcal{O}(\lambda^2)\, .$$
Note that $x_r$ is defined for both positive and negative values of $\lambda$. Thus, we find two functions $x_r^{\pm*}:[0, \lambda_0)\to V$ such that $x_r(\lambda) = x_r^{\pm*}(|\lambda|)\sim |\lambda|^{\frac{1}{2}}= |\lambda|^{\kappa_1}$ solve (\ref{x1eq}). In particular, taking $r=n$, we proved the existence of two of the branches in the statement of the theorem, namely those with asymptotics $x_0 = \ldots = x_{n-1}=0$ and $x_n\sim |\lambda|^{\kappa_1}$. 

In case $1\leq r\leq n-1$, we shall ignore the branch $x_r^{-*}(|\lambda|)$ defined for negative $\lambda$: it leads to an equation for $x_{r+1}$ that can not be solved. This follows from computations similar to those given below for the positive branch. We will not provide these considerations in detail. We shall write $x_r^*=x_r^{+*}$.

Concerning this positive branch, let us remark that $\lim_{\lambda\downarrow 0} \frac{x_r^*(\lambda)}{\lambda} = -\frac{\mathfrak{a}_0'(0)}{C}$ and $\frac{\mathfrak{a}_1(0)}{C}$ have opposite sign due to our assumption that $\mathfrak{a}_0'(0)\mathfrak{a}_1(0)>0$. This leads, for $r\leq j\leq n-1$, to the following two induction hypotheses:
\begin{itemize}
\item[1.]  Assume that we found for all $i=r,\ldots, j$ certain 
smooth functions $x_i^*:[0, \lambda_0^{\kappa_{i-r+1}})\to\R$ so that $$x_0\!=\!x_0^* \!=\!0, \dots, x_{r-1}\!=\!x_{r-1}^*\!=\!0, x_r\!=\! x^*_r(\lambda^{\kappa_1})\!\sim\! \lambda^{\kappa_1}, \ldots , x_j\!=\! x_j^*(\lambda^{\kappa_{j-r+1}})\! \sim\! \lambda^{\kappa_{j-r+1}}$$ 
solve the equations $f(x_0, \ldots, x_0;\lambda)=0, \ldots, f(x_j, x_{j-1}, \ldots, x_0, \ldots, x_0;\lambda)=0$.
\item[2.] The sign of $\lim_{\lambda\downarrow 0} \frac{x_j^*(\lambda^{\kappa_{j-r+1}})}{\lambda^{\kappa_{j-r+1}}}$ is opposite to the sign of $\frac{\mathfrak{a}_1(0)}{C}$.
\end{itemize}
We remark that we just proved that these induction hypotheses are true for $j=r$. We will now try obtain $x_{j+1}$ by solving the equation $f(x_{j+1}, x_{j}^*, \ldots, x_0^*, \ldots, x_0^*;\lambda)=0$. 

Anticipating that the steady state value of $x_{j+1}$ will be smoothly depending on $\lambda^{\kappa_{j-r+2}}$, let us at this point define the rescaled parameter $\mu:=\lambda^{\kappa_{j-r+2}} = \lambda^{\frac{1}{2^{j-r+1}}}$ and the rescaled unknown $x_{j+1}=: \mu y_{j+1}$. Then it holds that $\mu^{2^{j-i+1}} = \lambda^{\kappa_{i-r+1}}$, which inspires us to define also the rescaled functions $y_r^*, \ldots, y_j^*$ by
$$x_r^*(\lambda^{\kappa_1}) =: \mu^{2^{j-r+1}}y_r^*(\mu^{2^{j-r+1}}), \ldots, x_j^*(\lambda^{\kappa_{j-r+1}})=:\mu^2y_j^*(\mu^2)\ \mbox{for} \ \mu=\lambda^{\kappa_{j-r+2}} \, .$$ 
By the first induction hypothesis, the functions $y_i^*$ are smooth and $y_j^*(0)=\lim_{\lambda\downarrow 0}\frac{x_j^*(\lambda^{\kappa_{j-r+1}})}{\lambda^{\kappa_{j-r+1}}}$ $\neq 0$. Moreover, using (\ref{expansion}), one checks that in terms of the rescaled variables, the equation for $x_{j+1}=\mu y_{j+1}$ takes the form
$$\mu^2 \left( \mathfrak{a}_1(0)y_{j}^*(0) + Cy_{j+1}^2\right) + \mathcal{O}(|y_{j+1}|\cdot |\mu|^3 + |\mu|^{4}) = 0\, .$$
Dividing this by $\mu^2$, we find that we need to solve an equation of the form
$$h(y_{j+1}; \mu) = \left( \mathfrak{a}_1(0)y_{j}^*(0) + Cy_{j+1}^2\right) + \mathcal{O}(|y_{j+1}|\cdot |\mu| + |\mu|^{2}) = 0\, .$$
%$$\mathfrak{a}_0'(0)\lambda x_{j+1} + \mathfrak{a}_1(0)x_{j}^* +  C(0)x_{j+1}^2 + g(x_{j+1}, x_j^*, \ldots, x_0^*, \ldots, x_0^*; \lambda)=0\, .$$
%Below, we shall use this estimate in the following way: if $\kappa_i:=\frac{1}{2^{i-1}}$, $j\geq 1$ and 
%$$X_0 =\mathcal{O}(|\lambda|^{\kappa_{j+1}}), X_1=\mathcal{O}(|\lambda|^{\kappa_j}), \ldots, X_j=\mathcal{O}(|\lambda|^{\kappa_1}), X_{j+1}=\ldots = X_n=0\, ,$$ 
%then $f(X;\lambda) = \mathfrak{a}_1(0)X_1 + C(0)X_0^2 + \mathcal{O}(|\lambda|^{\kappa_{j-1}})$.
%Then $y_j^*(0)\neq 0$ and, dividing our equation by $\mu^2$, it reduces to 
%$h(y_{j+1}, \mu):= \mathfrak{a}_1(0)y_j^*(0) + C(0)y_{j+1}^2 + \mathcal{O}(\mu)=0\, .$$
%Using that $g(x_{j+1}, x_j^*, \ldots, x_0^*, \ldots, x_0^*)=\mathcal{O}(||x_{j+1}||^3+||x_{j+1}||\cdot|\lambda|^{\kappa_j} + |\lambda|^{\kappa_j})$, $\mathfrak{a}_0(\lambda)=\mathcal{O}(\lambda)$, $\mathfrak{a}_1(\lambda)=\mathfrak{a}_1(0)+\mathcal{O}(\lambda)$ and that $C(\lambda)=C(0)+\mathcal{O}(\lambda)$, we now find that 
Recall that $\mathfrak{a}_1(0), y_{j}^*(0), C(0)\neq 0$. In fact, by the second induction hypothesis it holds that $-\frac{\mathfrak{a}_1(0)y_j^*(0)}{C}>0$. Let $Y_{j+1}:=\sqrt{-\frac{\mathfrak{a}_1(0)y_j^*(0)}{C}}$ so that $\pm Y_{j+1}$ are the solutions to the equation $h(Y_{j+1}, 0)=\mathfrak{a}_1(0)y_j^*(0) + CY_{j+1}^2 = 0$. Then clearly $D_{y_{j+1}}h(\pm Y_{j+1},0)=\pm 2CY_{j+1}\neq 0$ and thus by the implicit function theorem there exist smooth functions $y_{j+1}^{\pm*}(\mu) = \pm Y_{j+1}+\mathcal{O}(\mu)$ solving $h(y^{\pm*}_{j+1}(\mu), \mu)=0$. Correspondingly, the expressions
$$x_{j+1}=(x_{j+1}^{*})^{\pm}(\lambda^{\kappa_{j-r+2}}) = \mu y_{j+1}^{\pm*}(\mu) = \pm \lambda^{\kappa_{j-r+2}}Y_{j+1} + \mathcal{O}(\lambda^{\kappa_{j-r+1}}) \sim \lambda^{\kappa_{j-r+2}}$$
provide two branches of solutions to the equation $f(x_{j+1}, x_j^* \ldots, x_0^*; \lambda)=0$. It holds that $\lim_{\lambda\downarrow 0} \frac{x_{j+1}^{\pm*}(\lambda^{\kappa_{j-r+2}})}{\lambda^{\kappa_{j-r+2}}}=y_{j+1}^{*\pm}(0)= \pm Y_{j+1}$ so that for precisely one of these branches the sign of this limit is opposite to the sign of $\frac{\mathfrak{a}_1(0)}{C}$. Only this branch can be used in the next step of the induction. This step is necessary precisely when $j+1\leq n-1$. This finishes the induction and the proof that also for $1\leq r\leq n-1$ there exist two
%Here, it is understood that the square root is only taken when $-\mathfrak{a}_1(0) x_j^{*\pm}/C(0)$. Thus, the sign of $x_j^*$ is fixed accordingly at this point. We also observe now that $x_{2}^{*\pm}$ only exist when $\frac{-\mathfrak{a}_2(0)x_1^*}{C(0)}= \frac{\mathfrak{a}_1(0)\mathfrak{a}_0'(0)}{C(0)^2}\lambda +\mathcal{O}(\lambda^2) >0$. This fixes the sign of $\lambda$.
 steady state branches with asymptotics
$$x_0=0, \ldots, x_{r-1}=0, x_r \sim \lambda^{\kappa_1}, \ldots, x_n \sim \lambda^{\kappa_n}\ .$$
Note that these two branches only differ in their values of $x_n$.

We remark that in case $\mathfrak{a}_0'(0)\mathfrak{a}_1(0)<0$, the branches exist for negative values of $\lambda$.
Finally, we consider the linearization matrix around a steady state on the $r$-th branch. This matrix is of the lower triangular form
\begin{align}\nonumber
\left( \begin{array}{ccccccc} 
\mathfrak{a}_0(\lambda) + \mathfrak{a}_1(\lambda)+\ldots + \mathfrak{a}_n(\lambda) & 0 & & \hdots&  & & 0\\
* & \mathfrak{a}_0(\lambda) &0 & &  & \hdots & 0\\
   & * & \ddots & &  & & \vdots \\
&  &  & \mathfrak{a}_0(\lambda) & 0 & & 0\\
\vdots & \vdots & & * & \mathfrak{b}_1(\lambda) & 0& 0 \\
& & & & & \ddots & \vdots \\
* & * & & \cdots & & * & \mathfrak{b}^{\pm}_{n-r+1}(\lambda)
\end{array} \right)\, .
\end{align}
The eigenvalues of this linearization matrix are $\mathfrak{a}_0(\lambda)+ \ldots+\mathfrak{a}_n(\lambda)$, $\mathfrak{a}_0(\lambda)$ ($r-1$ times), $\mathfrak{b}_1(\lambda), \ldots, \mathfrak{b}_{n-r}(\lambda)$ and $\mathfrak{b}_{n-r+1}^{\pm }(\lambda)$, where 
$$\mathfrak{b}_j(\lambda)=D_{0}f(x_{r+j-1}^*,\ldots , x_0^*) = 2Cx_{r+j-1}^*+ \mathcal{O}(|\lambda|^{\kappa_{j-1}})\sim |\lambda|^{\kappa_j}\, .$$
for $j=1,\ldots, n-r$ and 
$$\mathfrak{b}_{n-r+1}^{\pm}(\lambda)= D_0f(x_{n}^{\pm*}, \ldots , x_0^*) = 2Cx_{n}^{\pm*}+ \mathcal{O}(|\lambda|^{\kappa_{n-r}})\sim |\lambda|^{\kappa_{n-r+1}}\, .$$
For $\lambda\neq 0$, these eigenvalues are real and nonzero, i.e. the branches are hyperbolic. 
\end{proof}
\begin{remark}
Only one of the branches of solutions given in Theorem \ref{mainthm1} can possibly be stable and this is one of the two branches with the least synchrony and the largest amplitude, i.e. one of the two branches with asymptotics
$$x_0=0, x_1\sim |\lambda|^{\kappa_1}, \ldots, x_n\sim |\lambda|^{\kappa_n}\, .$$
This happens precisely when $\mathfrak{a}_0(0)+\ldots+\mathfrak{a}_n(0)<0$ and $\mathfrak{a}_1(0)>0$. We leave the proof of this claim to the interested reader.
 \end{remark}

\section{A semigroup network}\label{sec:01}
The feed-forward differential equations (\ref{gammaf}) form an example of a so-called {\it semigroup network}. These networks were defined by the authors in \cite{CCN}, and they have the remarkable property that the Lie bracket of two semigroup networks is again a semigroup network. 

In the context of our feed-forward chain, this is perhaps best explained as follows. First of all, note that we can write the differential equations (\ref{network}) as 
$$\dot x = \gamma_f(x)\ \mbox{for}\ x\in V^{n+1}\, ,$$
where the vector field $\gamma_f:V^{n+1}\to V^{n+1}$ is defined for a function $f:V^{n+1}\to V$ as
\begin{align}\label{gammaf}
(\gamma_f)_j(x):=f(x_{\sigma_0(j)}, x_{\sigma_1(j)}, \ldots, x_{\sigma_{n-1}(j)}, x_{\sigma_n(j)})\ \mbox{for}\ j=0, \ldots, n\, .
\end{align}
Here, $\sigma_0, \ldots, \sigma_n$ are maps from $\{0, \ldots, n\}$ to $\{0, \ldots, n\}$, given by 
$$\sigma_{i}(j):={\rm max}\, \{j-i, 0\}\, .$$
One can now observe that for all $0\leq i_1,i_2\leq n$ it holds that
$$\sigma_{i_1} \sigma_{i_2}:=\sigma_{i_1}\circ \sigma_{i_2}=\sigma_{{\rm min}\{i_1+i_2, n\}}\, .$$
This means in particular that the collection 
$$\Sigma:=\{\sigma_0, \sigma_1, \ldots, \sigma_n\}$$
 is closed under composition: it is a semigroup. In fact, $\Sigma$ is commutative and generated by the elements $\sigma_0$ and $\sigma_1$. Moreover, the elements $\sigma_0$ and $\sigma_n$ are somewhat special: $\sigma_0$ is the unit of $\Sigma$ and $\sigma_n$ plays the role of ``zero'', because $\sigma_n\sigma_i=\sigma_i\sigma_n=\sigma_n$ for all $0\leq i\leq n$.

All of this leads to the following result, that was proved for general semigroup networks in \cite{CCN}:
\begin{theorem}\label{brackettheorem}
Define for $0\leq i \leq n$ the linear map $A_{\sigma_i}: V^{n+1}\to V^{n+1}$ by
\begin{align}\nonumber
%\alpha(X_1, \ldots, X_n)&:= (X_1, X_1, \ldots, X_{n-1})\, ,\\
(A_{\sigma_i}X)_j&:= X_{{\rm min} \, \{j+i, n\}}\, .
\end{align}
Then it holds that 
$$A_{\sigma_{i_1}}\circ A_{\sigma_{i_2}}=A_{\sigma_{i_1}\sigma_{i_2}}\ \mbox{and that}\ A_{\sigma_i}(x_{\sigma_0(j)}, \ldots, x_{\sigma_n(j)}) = (x_{\sigma_0(\sigma_i(j))}, \ldots, x_{\sigma_n(\sigma_i(j))})\, .$$
Moreover, for any $f, g\in C^{\infty}(V^{n+1},V)$ it holds that $$[\gamma_f, \gamma_g] = \gamma_{[f,g]_{\Sigma}}$$
where
$$[f,g]_{\Sigma}(X) := \sum_{i=0}^n D_if(X)\cdot g(A_{\sigma_i}X) - D_ig(X)\cdot f(A_{\sigma_i}X)\, .$$ 
\end{theorem}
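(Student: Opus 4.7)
My plan is to establish the three assertions in the order stated; each is a direct computation and the first two feed into the third.

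For the identity $A_{\sigma_{i_1}}\circ A_{\sigma_{i_2}}=A_{\sigma_{i_1}\sigma_{i_2}}$ I would simply unpack the definitions:
$$(A_{\sigma_{i_1}}A_{\sigma_{i_2}}X)_j \;=\; (A_{\sigma_{i_2}}X)_{\min\{j+i_1,n\}} \;=\; X_{\min\{j+i_1+i_2,\,n\}},$$
while on the other hand $(A_{\sigma_{i_1}\sigma_{i_2}}X)_j = (A_{\sigma_{\min\{i_1+i_2,n\}}}X)_j = X_{\min\{j+\min\{i_1+i_2,n\},\,n\}}$, which agrees since capping with $n$ twice is the same as capping once. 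For the intertwining identity, I would set $Y_k:=x_{\sigma_k(j)}$ and compute
$$(A_{\sigma_i}Y)_k \;=\; Y_{\min\{k+i,n\}} \;=\; x_{\sigma_{\min\{k+i,n\}}(j)} \;=\; x_{(\sigma_i\sigma_k)(j)} \;=\; x_{\sigma_k(\sigma_i(j))},$$
using the semigroup rule $\sigma_i\sigma_k=\sigma_{\min\{i+k,n\}}$ together with commutativity of $\Sigma$. The resulting $k$-th component is exactly the $k$-th component of the right-hand side. Both identities thus say that $A$ realises the natural representation of $\Sigma$ on tuples of coordinates.

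For the main bracket formula I would work componentwise. Introducing the shorthand $X^{(j)}(x):=(x_{\sigma_0(j)},\ldots,x_{\sigma_n(j)})$ so that $(\gamma_g)_j(x)=g(X^{(j)}(x))$, the key input is a chain-rule computation:
$$\partial_{x_k}(\gamma_g)_j(x) \;=\; \sum_{i\,:\,\sigma_i(j)=k} D_ig\bigl(X^{(j)}(x)\bigr),$$
since $\partial_{x_k}(X^{(j)})_i$ equals $1$ when $\sigma_i(j)=k$ and vanishes otherwise. Plugging this into the coordinate expression for $[\gamma_f,\gamma_g]_j$ and substituting $(\gamma_f)_k(x)=f(X^{(k)}(x))$, the double sum over the cell index $k$ and the fibres $\{i:\sigma_i(j)=k\}$ collapses to a single sum over the argument-slot index $i\in\{0,\ldots,n\}$, yielding
$$\sum_{i=0}^{n}\Bigl(D_if(X^{(j)})\cdot g\bigl(X^{(\sigma_i(j))}\bigr)-D_ig(X^{(j)})\cdot f\bigl(X^{(\sigma_i(j))}\bigr)\Bigr).$$
At this point I invoke the intertwining identity in the form $X^{(\sigma_i(j))}=A_{\sigma_i}X^{(j)}$, and the expression becomes $[f,g]_{\Sigma}(X^{(j)}(x))=(\gamma_{[f,g]_{\Sigma}})_j(x)$, as required.

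The only real obstacle is bookkeeping: the cell index $k$ and the argument-slot index $i$, both ranging over $\{0,\ldots,n\}$, must not be conflated, and the collapse $\sum_k\sum_{i:\sigma_i(j)=k}=\sum_i$ is precisely where the semigroup structure of $\Sigma$ enters and delivers a formula of the same shape as $\gamma_f$, but with response function $[f,g]_{\Sigma}$. One must also pin down the sign convention for the Lie bracket of vector fields so that it matches the signs in the stated definition of $[f,g]_\Sigma$; beyond the chain rule, no analytic input is needed.
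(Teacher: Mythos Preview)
Your proposal is correct and follows essentially the same approach as the paper's proof: both verify the representation property of $A$ by a one-line index computation, both deduce the intertwining identity from the semigroup law $\sigma_i\sigma_k=\sigma_{\min\{i+k,n\}}$, and both establish the bracket formula by computing $(D\gamma_f(x)\cdot\gamma_g)_j$ via the chain rule, reindexing from the cell index $k$ to the argument-slot index $i$, and then invoking the intertwining identity $X^{(\sigma_i(j))}=A_{\sigma_i}X^{(j)}$. Your write-up is in fact slightly more explicit than the paper's about the reindexing step $\sum_k\sum_{i:\sigma_i(j)=k}=\sum_i$ and about the need to fix a sign convention for the Lie bracket, but the underlying argument is the same.
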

\begin{proof}
It is clear that $$(A_{\sigma_{i_1}}A_{\sigma_{i_2}}X)_j = X_{{\rm min}\, \{j+i_1+i_2, n\}} = (A_{\sigma_{{\rm min}\, \{i_1+i_2, n\}}}X)_j=(A_{\sigma_{i_1} \sigma_{i_2}}X)_j\, .$$
The fact that $A_{\sigma_i}(x_{\sigma_0(j)}, \ldots, x_{\sigma_n(j)}) = (x_{\sigma_0(\sigma_i(j))}, \ldots, x_{\sigma_n(\sigma_i(j))})$ is obvious from our definitions. This proves the first claim of the theorem.

Next, recall that $[\gamma_f, \gamma_g](x)= D\gamma_f(x)\cdot \gamma_g(x) - D\gamma_g(x)\cdot \gamma_f(x)$. One computes that
\begin{align}\nonumber
(D\gamma_g(x)\cdot \gamma_f(x))_j &= \sum_{k=0}^n D_k(\gamma_f(x))_j\cdot (\gamma_g)_k(x) = \\ \nonumber 
& \sum_{k=0}^n  \frac{d}{dx_k}f(x_{\sigma_0(j)}, \ldots, x_{\sigma_n(j)}) \cdot g(x_{\sigma_0(k)}, \ldots, x_{\sigma_n(k)}) = \\ \nonumber
&  \sum_{i=0}^n D_if(x_{\sigma_0(j)}, \ldots, x_{\sigma_n(j)}) \cdot g(x_{\sigma_0(\sigma_i(j))}, \ldots, x_{\sigma_n(\sigma_i(j))}) = \\ \nonumber
&  \sum_{i=0}^n D_if(x_{\sigma_0(j)}, \ldots, x_{\sigma_n(j)}) \cdot g(A_{\sigma_i}(x_{\sigma_0(j)}, \ldots, x_{\sigma_n(j)})) \, .
\end{align}
With a similar computation for $(D\gamma_g(x)\cdot \gamma_f(x))_j$, we thus find that
$$ [\gamma_f, \gamma_g]_j(x) = \left. \sum_{i=0}^n D_if(X) \cdot g(A_{\sigma_i}X) -  D_ig(X) \cdot f(A_{\sigma_i}X) \right|_{X=(x_{\sigma_0(j)}, \ldots, x_{\sigma_n(j)})}\, . $$ 
This proves the theorem.
\end{proof}
The first statement of Theorem \ref{brackettheorem} is that the map 
$$\sigma_i\mapsto A_{\sigma_i}\, , \ \Sigma \to \mathfrak{gl}(V^{n+1})$$
is a representation of $\Sigma$. The second and more important statement is that the Lie bracket of the two feed-forward vector fields $\gamma_f$ and $\gamma_g$ is another feed-forward vector field of the same form, namely $\gamma_{[f, g]_{\Sigma}}$. Moreover, the new response function $[f,g]_{\Sigma}$ is computed from $f$ and $g$ with the use of the representation $\sigma_i\mapsto A_{\sigma_i}$. 

The most important consequence of Theorem \ref{brackettheorem} is that the collection 
$$\gamma(C^{\infty}(V^{n+1},V))=\{\gamma_f\, |\, f\in C^{\infty}(V^{n+1}, V)\}$$
of feed-forward vector fields is a Lie algebra. This implies for example that there exists a large class of transformations of the phase space $V^{n+1}$ that leaves the class of feed-forward vector fields invariant: the time-$1$ flow $e^{\gamma_g}$ of any feed-forward vector field $\gamma_g$, will transform the feed-forward  vector field $\gamma_f$ into another feed-forward vector field, namely:
$$(e^{\gamma_g})_*\gamma_f= e^{\ad_{\gamma_g}}(\gamma_f) = \gamma_f+[\gamma_g, \gamma_f] + \frac{1}{2}[\gamma_g, [\gamma_g, \gamma_f]] + \ldots = \gamma_{f + [g, f]_{\Sigma} + \frac{1}{2}[g, [g,f]_{\Sigma}]_{\Sigma}+\ldots}\, .$$
This explains why transformations of $V^{n+1}$ of the form $e^{\gamma_g}$ play an important role in the theory of local normal forms of semigroup networks. Theorem \ref{normalformtheoremparameters} below, for example, was proved in \cite{CCN}. It describes the normal forms of one-parameter families of feed-forward networks. To formulate it, we define for $k, l\geq 0$,
$$P^{k,l}:=\{f:V^{n+1}\times\R\to V \ \mbox{homogeneous polynomial of degree} \ k+1\ \mbox{in}\ X\ \mbox{and degree}\ l \ \mbox{in}\ \lambda\}\, .$$ 
\begin{theorem}[Normal form theorem]
 \label{normalformtheoremparameters}
Let $f\in C^{\infty}(V^{n+1}\times\R, V)$ and assume that $f(0;\lambda)=0$. Let us write the Taylor expansion of $f$ as 
$$f= (f_{0,0} + f_{0,1}+ f_{0,2} + \ldots) + (f_{1,0} + f_{1,1} + f_{1,2}+\ldots) + \ldots$$ 
with $f_{k,l}\in P^{k,l}$. We moreover denote by 
$$A:=D_x\gamma_f(0;0) = \gamma_{f_{0,0}}=A_S + A_N: V^{n+1}\to V^{n+1}$$
the $SN$-decomposition of the linearization $D_x\gamma_f(0;0)$. Finally, let $1\leq r_1, r_2<\infty$.

Then there exists a polynomial family $\lambda\mapsto \Phi(\cdot; \lambda)$ of analytic diffeomorphisms, defined for $\lambda$ in an open neighborhood of $0$ and each sending an open neighborhood of $0$ in $V^{n+1}$ to an open neighborhood of $0$ in $V^{n+1}$, such that $\Phi(\cdot; \lambda)$ conjugates $\gamma_{f(\cdot; \lambda)}$ to $\gamma_{\overline f(\cdot;\lambda)}$, where
$$ \overline f = (f_{0,0} + \overline f_{0,1}+ \overline f_{0,2} + \ldots) + (\overline f_{1,0} + \overline f_{1,1} + \overline f_{1,2}+\ldots) + \ldots $$
has the property that for all $ 0\leq k\leq r_1$ and $0\leq l\leq r_2$,
\begin{align}\label{nfsymmetry}
e^{s A_S} \circ \gamma_{\overline f_{k,l}} = \gamma_{\overline f_{k,l}}  \circ e^{sA_S}\ \mbox{for all}\ s\in \R\, .
\end{align}
%$$[\gamma_{\overline f_{k,l}}, \gamma_{f_{0,0}}^S]=0  \ \mbox{for all} \ 0\leq k\leq r_1 \ \mbox{and}\ 0\leq l\leq r_2\, .$$
\end{theorem}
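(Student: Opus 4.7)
The plan is to adapt the classical Poincaré--Dulac normal form procedure to the Lie algebra of feed-forward vector fields identified in Theorem~\ref{brackettheorem}. That theorem is what makes the procedure stay inside the feed-forward class: if we take each normalizing transformation to be the time-one flow $e^{\gamma_g}$ of a \emph{feed-forward} generator $g$, then the conjugated vector field
\begin{equation*}
(e^{\gamma_g})_* \gamma_f \;=\; e^{\ad_{\gamma_g}} \gamma_f \;=\; \gamma_{f + [g,f]_\Sigma + \frac{1}{2}[g,[g,f]_\Sigma]_\Sigma + \cdots}
\end{equation*}
is again feed-forward. Moreover, because the bracket $[\cdot,\cdot]_\Sigma$ respects the bi-grading, $[P^{k,l},P^{k',l'}]_\Sigma\subset P^{k+k',\,l+l'}$, the whole problem reduces to an algebraic question about how $\ad_A$ with $A=\gamma_{f_{0,0}}$ acts on the finite-dimensional graded pieces $\gamma(P^{k,l})$.

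I would order the pairs $(k,l)\in\{0,\dots,r_1\}\times\{0,\dots,r_2\}$ by a total order refining $k+l$, and inductively produce generators $g_{k,l}\in P^{k,l}$. Assume $\overline f_{k',l'}$ is already in normal form for every strictly earlier $(k',l')$. A generator $g_{k,l}\in P^{k,l}$ alters the bi-degree-$(k,l)$ part of $f$ by exactly $-\ad_A\gamma_{g_{k,l}} = \gamma_{[g_{k,l},f_{0,0}]_\Sigma}$, while every higher bracket lives in strictly larger bi-degree and hence cannot disturb earlier strata. The step therefore reduces to solving the homological equation
\begin{equation*}
\ad_A\,\gamma_{g_{k,l}} \;=\; \gamma_{f_{k,l}} \;-\; \gamma_{\overline f_{k,l}},
\end{equation*}
subject to the normal form requirement $\gamma_{\overline f_{k,l}}\in\ker\ad_{A_S}$; this is equivalent to (\ref{nfsymmetry}) because a polynomial vector field commutes with every $e^{sA_S}$ precisely when it is annihilated by $\ad_{A_S}$.

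Solvability then follows from the Jordan--Chevalley decomposition at the $\ad$-level. Granting that the SN-decomposition $A=A_S+A_N$ constructed in Sections~\ref{sec:1} and \ref{semigroupringsection} leaves $\gamma(P^{k,l})$ invariant --- this is exactly what the linear normal form theorem developed there supplies --- the restriction $\ad_{A_S}|_{\gamma(P^{k,l})}$ is semisimple and commutes with the nilpotent $\ad_{A_N}|_{\gamma(P^{k,l})}$, yielding a splitting
\begin{equation*}
\gamma(P^{k,l}) \;=\; \ker\ad_{A_S} \,\oplus\, \im\ad_{A_S}.
\end{equation*}
On the summand $\im\ad_{A_S}$ the operator $\ad_A=\ad_{A_S}+\ad_{A_N}$ is invertible (semisimple part invertible there, plus a commuting nilpotent perturbation). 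Decomposing $\gamma_{f_{k,l}} = \gamma_{\overline f_{k,l}} + \gamma_{f_{k,l}^I}$ accordingly, one solves $\ad_A\gamma_{g_{k,l}} = \gamma_{f_{k,l}^I}$ uniquely within $\im\ad_{A_S}$. Polynomial dependence of $\Phi$ on $\lambda$ is automatic, since only finitely many bi-degrees are normalized and each $g_{k,l}$ is polynomial; analyticity in $x$ comes from the flow of a polynomial vector field.

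The step I expect to be the main obstacle is the interplay between response functions and vector fields. The inversion just described lives in $\gamma(P^{k,l})$, whereas the theorem demands a feed-forward generator $g_{k,l}\in P^{k,l}$ itself. The intertwining supplied by Theorem~\ref{brackettheorem} between $[f_{0,0},\cdot]_\Sigma$ on $P^{k,l}$ and $\ad_A$ on $\gamma(P^{k,l})$ is what rescues this: once one shows that the map $g\mapsto\gamma_g$ is a surjection from $P^{k,l}$ onto the ambient $\ad_A$-invariant subspace in which the inversion is being performed --- a linear-algebraic check on the semigroup representation $\sigma_i\mapsto A_{\sigma_i}$ --- the vector-field solution lifts to a response function, the induction closes, and the desired $\Phi$ is obtained as the accumulated composition of time-one flows of the $\gamma_{g_{k,l}}$.
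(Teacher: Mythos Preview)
Your proposal is correct and follows essentially the same route as the paper's own proof sketch: exploit the bi-grading $[P^{k,l},P^{K,L}]_\Sigma\subset P^{k+K,l+L}$, normalize degree by degree via time-one flows $e^{\gamma_{g_{k,l}}}$, and solve the homological equation using the splitting $\ker\ad_{A_S}\oplus\im\ad_{A_S}$ on each graded piece. The paper's sketch is terser and defers the crucial fact---that $A_S$ and $A_N$ are themselves semigroup networks, so that $\ad_{A_S}$ and $\ad_{A_N}$ preserve $\gamma(P^{k,l})$---to \cite{CCN}; you correctly identify this as the linchpin and point to Sections~\ref{sec:1}--\ref{semigroupringsection} for it.

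One small simplification: your final paragraph about lifting the vector-field solution back to a response function is more involved than necessary. Since $A_S=\gamma_{f_S}$ is a semigroup network, the operator $\ad_{A_S}$ on $\gamma(P^{k,l})$ is the push-forward under $\gamma$ of $[f_S,\cdot]_\Sigma$ on $P^{k,l}$. One can therefore run the entire homological argument directly on $P^{k,l}$ (which is what \cite{CCN} does), rather than solving in $\gamma(P^{k,l})$ and then choosing a preimage. Either way works; the surjectivity you invoke is automatic since $\gamma(P^{k,l})$ is by definition the image of $P^{k,l}$ under $\gamma$.
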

\begin{proof}{\bf \ [Sketch]}
This proof is based on the fact that the spaces $P^{k,l}$ are graded, that is 
$$[P^{k,l}, P^{K,L}]_{\Sigma} \subset P^{k+K, l+L}\, .$$ 
As a consequence, one can start by choosing an $g_{0,1}\in P^{0,1}$ and use the time-$1$ flow $e^{\gamma_{g_{0,1}}}$ of $\gamma_{g_{0,1}}$ to transform $\gamma_{f}=\gamma_{f_{0,0}+f_{0,1}+\ldots}$ into
$$(e^{\gamma_{g_{0,1}}})_*\gamma_f = \gamma_{f+[g_{0,1},f]_{\Sigma}+\ldots}=\gamma_{f_{0,0}+ (f_{0,1}+[g_{0,1}, f_{0,0}]_{\Sigma}) +\ldots}\, .$$
By choosing $g_{0,1}$ appropriately, one can then make sure that $\gamma_{\overline f_{0,1}}:=\gamma_{f_{0,1}+[g_{0,1}, f_{0,0}]_{\Sigma}}$ commutes with the semisimple part $A_S$ of $\gamma_{f_{0,0}}$. This is a consequence of the fact that both $A_S$ and $A_N$ are semigroup networks. The precise argument leading to this result is nontrivial though and is given in detail in \cite[Section 9]{CCN}.

Next, one normalizes $f_{0,2}, \ldots, f_{0,r_2}, f_{1,0}, \ldots, f_{1, r_2}, \ldots, f_{r_1,0}, \ldots, f_{r_1, r_2}$. The grading of the $P^{k,l}$ ensures that, once $f_{k,l}$ has been normalized into ${\overline f}_{k,l}$, it is not changed anymore by any subsequent transformations.
\end{proof}
In short, Theorem \ref{normalformtheoremparameters} says that we can arrange that the local normal form of a parameter family of feed-forward vector fields $\gamma_f$ is another parameter family of feed-forward vector fields $\gamma_{\overline f}$. Moreover, this normal form may be assumed to admit a ``normal form symmetry'': it commutes with the flow of the semisimple part $A_S$ of the linearization $D_x\gamma_f(0;0) = \gamma_{f_{0,0}}$. 

%\begin{theorem}
%Let $f:V^{n+1}\times\R\to V$ be a smooth parameter dependent function and assume that $f(0;0)=0$. Then there exists a smooth parameter dependent function $\overline{f}: V^{n+1}\times (-\lambda_0, \lambda_0) \to V$, an open neighborhood $U\subset V^{n+1}$ of $0\in V^{n+1}$ and an analytic parameter dependent coordinate transformation $\psi: U\times (-\lambda_0, \lambda_0)\to V^{n+1}$ such that 
%$$\psi_*\gamma_f = \gamma_{\overline{f}}$$
%This $\overline{f}$ has the property that $\gamma_{\overline f}(e^{sL}x)=e^{sL}\gamma_{\overline f}(x)$.
%\end{theorem}
%It holds that 
%$$\sigma^i\circ\sigma^j=\sigma^{\min\{i+j,n-1\}}\, $$
%This means that $\{\sigma^0, \sigma, \sigma^2, \ldots, \sigma^{n-1}\}$ form a representation of a commutative semigroup with unit of order $n$.

\section{A linear normal form}\label{sec:1}
Theorem \ref{normalformtheoremparameters} describes the normalization of the parameter dependent coupled cell network vector field $\gamma_f$ with respect to the semisimple part of the linearization $$A:=D_x\gamma_f(0;0)=\gamma_{f_{0,0}}\, .$$ 
We recall that $f_{0,0}(X)=\mathfrak{a}_0(0)X_0 + \cdot + \mathfrak{a}_n(0)X_n$ for certain 
$$\mathfrak{a}_0(0), \ldots, \mathfrak{a}_n(0)\in \lieg:=\mathfrak{gl}(V)$$
and hence that the matrix of $A$ is given by
\begin{align}\label{matrixagain}
%{\rm mat}\ \gamma_{\sum_{j=0}^{n} \mathfrak{a}_jX_j} = 
A=\left( \begin{array}{rcccc} 
\mathfrak{a}_0(0) + \mathfrak{a}_1(0)+\ldots + \mathfrak{a}_n(0) & 0 & \cdots & 0& 0\\
\mathfrak{a}_1(0) + \ldots + \mathfrak{a}_n(0) & \mathfrak{a}_0(0) &\cdots & 0& 0\\
\vdots\   & \vdots & \ddots & \vdots &  \vdots \\
\mathfrak{a}_{n-1}(0)+\mathfrak{a}_n(0) & \mathfrak{a}_{n-2}(0) & \cdots  & \mathfrak{a}_0(0)& 0\\
\mathfrak{a}_n(0) & \mathfrak{a}_{n-1}(0) & \cdots & \mathfrak{a}_1(0) & \mathfrak{a}_0(0)
\end{array} \right)\, .
\end{align}
To determine the semisimple part of this matrix, we will bring $A$ in ``linear coupled cell network normal form''. This linear normal form is described in the following theorem:
\begin{theorem}\label{linearnormalform}
Let $A$ be the $(n+1)\times (n+1)$ matrix given in (\ref{matrixagain}) and assume that \(\mathfrak{a}_0(0)\) is semisimple.
Then there exist linear maps $g_1, \ldots, g_{n-1}:V^{n+1}\to V$ of the form
$$g_i(X)=\mathfrak{b}_i(X_i-X_n) \ \mbox{with}\ \mathfrak{b}_i\in \lieg$$ 
so that the consecutive time-$1$ flows of the linear maps $\gamma_{g_i}:V^{n+1}\to V^{n+1}$
%\begin{align}\label{matrixagainagain}
%{\rm mat}\ \gamma_{\sum_{j=0}^{n} \mathfrak{a}_jX_j} = 
%B =\gamma_b=\left( \begin{array}{rcccc} 
%\mathfrak{b}_1+\ldots + \mathfrak{b}_n & 0 & \cdots & 0& 0\\
%\mathfrak{b}_1 + \ldots + \mathfrak{b}_n & 0 &\cdots & 0& 0\\
%\vdots\   & \vdots & \ddots & \vdots &  \vdots \\
%\mathfrak{b}_{n-1}+\mathfrak{b}_n & \mathfrak{b}_{n-2} & \cdots  & 0& 0\\
%\mathfrak{b}_n & \mathfrak{b}_{n-1} & \cdots & \mathfrak{b}_1 & 0
%\end{array} \right)\, .
%\end{align}
transform $A$ into 
\begin{align}
\overline A := \left(e^{\gamma_{g_{n-1}}}\circ \ldots \circ  e^{\gamma_{g_{1}}} \right) \circ A\circ \left(e^{-\gamma_{g_{1}}}\circ \ldots \circ  e^{-\gamma_{g_{n-1}}} \right)=\ \ \  &
\nonumber \\ \nonumber
\left( \begin{array}{rcccc} 
\mathfrak{a}_0(0) + \bar{\mathfrak{a}}_1(0)+\ldots + \bar{\mathfrak{a}}_n(0) & 0 & \cdots & 0& 0\\
\bar{\mathfrak{a}}_1(0) + \ldots + \bar{\mathfrak{a}}_n(0) & \mathfrak{a}_0(0) &\cdots & 0& 0\\
\vdots\   & \vdots & \ddots & \vdots &  \vdots \\
\bar{\mathfrak{a}}_{n-1}(0)+\bar{\mathfrak{a}}_n(0) & \bar{\mathfrak{a}}_{n-2}(0) & \cdots  & \mathfrak{a}_0(0)& 0\\
\bar{\mathfrak{a}}_n(0) & \bar{\mathfrak{a}}_{n-1}(0) & \cdots & \bar{\mathfrak{a}}_1(0) & \mathfrak{a}_0(0)
\end{array} \right)&
\end{align}
for which it holds that 
$$[\mathfrak{a}_0(0), \bar{\mathfrak{a}}_i(0)]=\mathfrak{a}_0(0) \bar{\mathfrak{a}}_i(0)-\bar{\mathfrak{a}}_i(0) \mathfrak{a}_0(0)=0\ \mbox{for all}\ 1\leq i\leq n-1\, .$$
As a consequence, $\overline A$ admits a decomposition $\overline A=\overline A_S+\overline A_N$ with 
$$\overline A_S  =  \left( \begin{array}{rcccc} 
\mathfrak{a}_0(0) + \bar{\mathfrak{a}}_1(0)+\ldots + \bar{\mathfrak{a}}_n(0) & 0 & \cdots & 0& 0\\
\bar{\mathfrak{a}}_1(0) + \ldots + \bar{\mathfrak{a}}_n(0) & \mathfrak{a}_0(0) &\cdots & 0& 0\\
\vdots\   & \vdots & \ddots & \vdots &  \vdots \\
\bar{\mathfrak{a}}_{1}(0) + \ldots  + \bar{\mathfrak{a}}_n(0) & 0 & \cdots  & \mathfrak{a}_0(0)& 0\\
\bar{\mathfrak{a}}_1(0)+ \ldots + \bar{\mathfrak{a}}_n(0) & 0 & \cdots & 0 & \mathfrak{a}_0(0)
\end{array} \right)
% \ \mbox{and} \ A_N =  \left( \begin{array}{rcccc} 
%0 & 0 & \cdots & 0& 0\\
%0 &  0 &\cdots & 0& 0\\
%\vdots\   & \vdots & \ddots & \vdots &  \vdots \\
%\bar{\mathfrak{a}}_{n-1}+\bar{\mathfrak{a}}_n & \bar{\mathfrak{a}}_{n-2} & \cdots  &  0& 0\\
%\bar{\mathfrak{a}}_n & \bar{\mathfrak{a}}_{n-1} & \cdots & 0 & 0
%\end{array} \right)
$$
such that the map $\overline A_N$ is nilpotent and 
$$[\overline A_S, \overline A_N] = \overline A_S \overline A_N - \overline A_N \overline A_S =0\, .$$ 
The map $\overline A_S$ is semisimple if and only if $\mathfrak{a}_0(0)+\bar{\mathfrak{a}}_1(0)+\ldots + \bar{\mathfrak{a}}_n(0)$ is semisimple.
\end{theorem}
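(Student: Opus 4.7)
The plan is to build the conjugation inductively, normalizing one coefficient $\mathfrak{a}_i$ at a time for $i = 1, 2, \ldots, n-1$. By Theorem \ref{brackettheorem}, conjugation by $e^{\gamma_g}$ sends the feed-forward vector field $\gamma_{f_{0,0}}$ to $\gamma_{e^{\ad_g}f_{0,0}}$, so the iteration stays inside the class of matrices (\ref{matrixagain}). Since $\mathfrak{a}_0(0)$ is semisimple, $\ad_{\mathfrak{a}_0(0)}$ is semisimple on $\mathfrak{gl}(V)$, giving the splitting
\[
\mathfrak{gl}(V) = \ker(\ad_{\mathfrak{a}_0(0)}) \oplus \im(\ad_{\mathfrak{a}_0(0)}),
\]
which is precisely what is needed to kill the non-commuting component of each $\mathfrak{a}_i$.

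The key technical calculation I would carry out is the explicit form of $[g_i, f]_\Sigma$ when $g_i(X)=\mathfrak{b}_i(X_i-X_n)$ and $f(X)=\sum_{m=0}^n \alpha_m X_m$ is linear. Applying the bracket formula of Theorem \ref{brackettheorem}, the $-X_n$ term in $g_i$ is exactly what cancels the overflow contributions with $\min\{m+i,n\}=n$, leaving
\[
[g_i,f]_\Sigma(X) = \sum_{m=0}^{n-i-1} [\mathfrak{b}_i,\alpha_m](X_{m+i}-X_n).
\]
So the $X_j$-coefficient of $[g_i,f]_\Sigma$ vanishes for $j<i$, equals $[\mathfrak{b}_i,\alpha_{j-i}]$ for $i\le j \le n-1$, and equals $-\sum_{m=0}^{n-i-1}[\mathfrak{b}_i,\alpha_m]$ for $j=n$. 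Each application of $\gamma_{g_i}$ annihilates the first $i+1$ components and pushes the first nonzero cell downward by at least $i$, so $\gamma_{g_i}$ is nilpotent and $e^{\gamma_{g_i}}$ is a finite polynomial in $\gamma_{g_i}$.

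Running the induction, at step $i$ I would decompose the current $\mathfrak{a}_i = \mathfrak{a}_i^\parallel+\mathfrak{a}_i^\perp$ with $\mathfrak{a}_i^\perp \in \im(\ad_{\mathfrak{a}_0(0)})$ and solve $[\mathfrak{a}_0(0),\mathfrak{b}_i] = \mathfrak{a}_i^\perp$; conjugating by $e^{\gamma_{g_i}}$ then replaces $\mathfrak{a}_i$ with $\bar{\mathfrak{a}}_i := \mathfrak{a}_i^\parallel \in \ker(\ad_{\mathfrak{a}_0(0)})$. The first-order correction is $[\mathfrak{b}_i,\mathfrak{a}_0(0)] = -\mathfrak{a}_i^\perp$, and all higher-order contributions to the $X_i$-coefficient vanish, because the bracket formula shows the $X_i$-coefficient of $[g_i,h]_\Sigma$ depends only on the $X_0$-coefficient of $h$, which is identically zero throughout. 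The same vanishing argument also shows that the previously normalized $\bar{\mathfrak{a}}_{i'}$ with $i'<i$ are never disturbed. After the $n-1$ conjugations we end up with $\overline A$ of the stated form and $[\mathfrak{a}_0(0),\bar{\mathfrak{a}}_i(0)]=0$ for $1\le i\le n-1$.

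For the $SN$-statement I would check the three assertions by direct calculation. Nilpotence of $\overline A_N$ follows from $(\overline A_N x)_0=0$ plus an easy induction showing $(\overline A_N^k x)_j$ depends only on $x_0,\ldots,x_{j-k}$, whence $\overline A_N^{n+1}=0$. The commutation $[\overline A_S,\overline A_N]=0$ collapses to the vanishing of $\sum_{i=1}^{j-1}[\mathfrak{a}_0(0),\bar{\mathfrak{a}}_i(0)](x_{j-i}-x_0)$, which holds because only $\bar{\mathfrak{a}}_1,\ldots,\bar{\mathfrak{a}}_{n-1}$ appear in $\overline A_N$. For semisimplicity, writing $\overline A_S = I_{n+1}\otimes \mathfrak{a}_0(0) + e \otimes c$ with $c := \bar{\mathfrak{a}}_1(0)+\ldots+\bar{\mathfrak{a}}_n(0)$ and $e$ the rank-one idempotent on $\R^{n+1}$ with ones in its first column, the spectral decomposition of $e$ yields the invariant splitting $V^{n+1}=(V_1\otimes V)\oplus (V_0\otimes V)$, with $V_1=\mathrm{span}\{(1,\ldots,1)\}$ and $V_0=\{y\in\R^{n+1}:y_0=0\}$, on which $\overline A_S$ acts as $\mathfrak{a}_0(0)+c$ and as $\mathfrak{a}_0(0)$ respectively. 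The main obstacle is the delicate role of $\bar{\mathfrak{a}}_n(0)$: it is not forced to commute with $\mathfrak{a}_0(0)$, yet the $SN$-splitting still closes because $\bar{\mathfrak{a}}_n(0)$ enters $\overline A_S$ only through $c$ on the synchronous diagonal $V_1\otimes V$, and is entirely absent from $\overline A_N$, so the missing commutation relation $[\mathfrak{a}_0(0),\bar{\mathfrak{a}}_n(0)]$ is never invoked.
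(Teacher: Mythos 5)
Your proof is correct and follows essentially the same strategy as the paper's: the same conjugating maps $e^{\gamma_{g_i}}$ with $g_i(X)=\mathfrak{b}_i(X_i-X_n)$, the same inductive choice of $\mathfrak{b}_i$ via the splitting $\lieg=\im\ad_{\mathfrak{a}_0(0)}\oplus\ker\ad_{\mathfrak{a}_0(0)}$, and the same final decomposition in which $\bar{\mathfrak{a}}_n(0)$ is confined to the first column of $\overline A_S$ and absent from $\overline A_N$. The only difference is bookkeeping: you compute directly with the bracket $[\cdot,\cdot]_\Sigma$ of Theorem \ref{brackettheorem} and with explicit matrices, whereas the paper packages the identical computation in the semigroup ring $M=\lieg\Sigma$ using the nilpotent elements $\mu_i=\sigma_i-\sigma_n$ --- a reformulation the paper itself notes is optional.
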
 We call the matrix \(\overline A\) of Theorem \ref{linearnormalform} the {\it linear almost normal form} of the linearization matrix $A$. When $\mathfrak{a}_0(0)$ and $\mathfrak{a}_0(0)+\bar{\mathfrak{a}}_1(0)+\ldots +\bar{\mathfrak{a}}_n(0)$ are both semisimple, then the desired $SN$-decomposition of the linearization can be read off from this almost normal form. %For our purposes though, it will suffice to assume that $\mathfrak{a}_0(0)$ is semisimple. 

%We consider the following system (\cite[Example10.2]{CCN}):
%\begin{eqnarray*}
%\dot{x}_1&=& f(x_1,x_1,x_1), \\
%\dot{x}_2&=& f(x_2,x_1,x_1), \quad x_1,x_2,x_3\in V,\\
%\dot{x}_3&=& f(x_3,x_2,x_1).
%\end{eqnarray*}
% This we call a \(\langle 112\rangle_0 \) network.
%The notation reflects the fact that the semigroup element \(\sigma_2\) is given by \((1,1,2)\) (see the second column
%of arguments in the differential equation)
%and, together with the unity \(\sigma_1\) (given by \( (1,2,3)\), the first column, and responsible for the subscript \(0\)) generates the semigroup.
%\begin{remark}
%The \(\langle 112\rangle\) network reduces to the trivial \(\langle \cdot\rangle_0\) network
%according to the reduction scheme sketched in \cite[section 10]{CCN}.
%\end{remark}

We shall prove Theorem \ref{linearnormalform} in Section \ref{semigroupringsection} below. But before we do so, we would like to provide an alternative proof in case $n=2$ here: in this case the theorem follows quite easily from an explicit matrix computation.  The proof in Section \ref{semigroupringsection} will be a bit more abstract.

\begin{proofof}{Theorem \ref{linearnormalform} in case ${\bf n=2}$}\label{ex3}
We shall put $\lambda=0$ and write $\mathfrak{a}_i=\mathfrak{a}_i(0)$. Then, in case $n=2$, the matrix (\ref{matrixagain}) takes the form
\begin{align}\label{matrix3} 
A=\left( \begin{array}{rrr} \mathfrak{a}_0 + \mathfrak{a}_1+ \mathfrak{a}_2 & 0 & 0 \\ \mathfrak{a}_1+ \mathfrak{a}_2 & \mathfrak{a}_0 & 0 \\ \mathfrak{a}_2 & \mathfrak{a}_1 & \mathfrak{a}_0 \end{array}\right) 
\, 
 \end{align}
for certain $\mathfrak{a}_0, \mathfrak{a}_1, \mathfrak{a}_2\in \lieg$. We can decompose $A$ as $A=A_S+A_N$ with
\begin{align}\label{decomp3} 
A_S:= \left( \begin{array}{rrr} \mathfrak{a}_0 + \mathfrak{a}_1+ \mathfrak{a}_2 & 0 & 0 \\ \mathfrak{a}_1+ \mathfrak{a}_2 & \mathfrak{a}_0 & 0 \\ \mathfrak{a}_1+\mathfrak{a}_2 & 0 & \mathfrak{a}_0 \end{array} \right) \ \mbox{and}\ A_N:= \left( \begin{array}{rrr} 0 & 0 & 0 \\  0 & 0 & 0 \\ -\mathfrak{a}_1 & \mathfrak{a}_1 & 0 \end{array} \right) \, .
 \end{align}
It is clear that $A_N$ is nilpotent. In addition, we can think of $A_S$ as semisimple, because
$$({\rm id}_V, {\rm id}_V, {\rm id}_V), (0, {\rm id}_V, 0) \ \mbox{and}\ (0,0,{\rm id}_V)$$ 
are
``eigenvectors'' of $A_S$ with respectively the ``eigenvalues'' 
$$\mathfrak{a}_0+\mathfrak{a}_1+\mathfrak{a}_2, \mathfrak{a}_0\ \mbox{and}\ \mathfrak{a}_0\, .$$ 
Of course, these eigenvalues are actually linear maps, namely elements of $\lieg$, and it is clear that $A_S$ is truly semisimple only when these eigenvalues are semisimple elements of $\lieg$. 

The decomposition $A=A_S+A_N$ may of course not be the $SN$-decomposition of $A$, because $A_S$ and $A_N$ in general need not commute. In fact, one computes that
\begin{align}%\label{commutator3} 
\nonumber
\left[ \left( \begin{array}{rrr} \mathfrak{a}_0 + \mathfrak{a}_1+ \mathfrak{a}_2 & 0 & 0 \\ \mathfrak{a}_1+ \mathfrak{a}_2 & \mathfrak{a}_0 & 0 \\ \mathfrak{a}_1+\mathfrak{a}_2 & 0 & \mathfrak{a}_0 \end{array} \right), \left( \begin{array}{rrr} 0 & 0 & 0 \\  0 & 0 & 0 \\ -\mathfrak{a}_1 & \mathfrak{a}_1 & 0 \end{array} \right) \right]
=\left( \begin{array}{rrr} 0 & 0 & 0 \\ 0 & 0 & 0 \\ -[\mathfrak{a}_0, \mathfrak{a}_1] & [\mathfrak{a}_0, \mathfrak{a}_1] & 0 
\end{array}\right)\, .
\end{align}
%This confirms that $A_S$ and $A_N$ indeed do not commute in general.  %if and only if $[\mathfrak{a}_0, \mathfrak{a}_1]=0$. In particular, in case $\dim V=1$, then $A=A_S+A_N$ is the $SN$-decomposition of $A$. In case $\dim V>1$, more work is needed to obtain the correct decomposition. 
We shall resolve this problem by transforming $A$ to a matrix $\overline A$ for which $[\mathfrak{a}_0, \bar{\mathfrak{a}}_1]=0$. 
This works as follows. First, we define, for some $\mathfrak{b}_1\in \lieg$, the function 
$$g_1:V^3\to V\ \mbox{by}\ g_1(X_0, X_1, X_2):= \mathfrak{b}_1 (X_1-X_2)\, .$$ 
Then $\gamma_{g_1}:V^3\to V^3$ is a linear map of the form
$$\gamma_{g_1} = % \left( \begin{array}{ccc} 0 & 0 & 0 \\ \mathfrak{b}_1 & 0  & 0 \\  0 & \mathfrak{b}_1 & 0 \end{array}\right) -  \left( \begin{array}{ccc} 0 & 0 & 0 \\ \mathfrak{b}_1 & 0 & 0 \\ \mathfrak{b}_1 & 0 & 0 \end{array}\right) = 
\left( \begin{array}{ccc} 0 & 0 & 0 \\ 0 & 0 & 0 \\ -\mathfrak{b}_1 & \mathfrak{b}_1 & 0 \end{array}\right)\, . $$
Moreover, it holds that 
$$e^{\gamma_{g_1}} = \exp \left( \begin{array}{ccc} 0 & 0 & 0 \\ 0 & 0 & 0 \\ -\mathfrak{b}_1 & \mathfrak{b}_1 & 0 \end{array}\right) =  
\left( \begin{array}{ccc} 1 & 0 & 0 \\ 0 & 1 & 0 \\ -\mathfrak{b}_1 & \mathfrak{b}_1 & 1 \end{array}\right)\, .$$
A little computation now shows that conjugation with $e^{\gamma_{g_1}}$ transforms $A$ into 
$$
\overline A \!=\! e^{\gamma_{g_1}} \circ A \circ e^{-\gamma_{g_1}} \!=\! \left( \begin{array}{rrr} \mathfrak{a}_0 + \mathfrak{a}_1+ \mathfrak{a}_2 & 0 & 0 \\ \mathfrak{a}_1 + \mathfrak{a}_2 & 
\mathfrak{a}_0 & 0 \\ \mathfrak{a}_2+[\mathfrak{a}_0,\mathfrak{b}_1] & \mathfrak{a}_1-[\mathfrak{a}_0,\mathfrak{b}_1]& \mathfrak{a}_0 \end{array}\right) \! =\!
\left( \begin{array}{rrr} \mathfrak{a}_0 + \bar{\mathfrak{a}}_1+\bar{\mathfrak{a}}_2& 0 & 0 \\ \bar{\mathfrak{a}}_1+\bar{\mathfrak{a}}_2 & \mathfrak{a}_0 & 0 \\ 
\bar{\mathfrak{a}}_2 & \bar{\mathfrak{a}}_1 & \mathfrak{a}_0 \end{array}\right) 
\, .
$$
Here, we defined 
$$\bar{\mathfrak{a}}_1:= \mathfrak{a}_1-[\mathfrak{a}_0,\mathfrak{b}_1]\ \mbox{and}\ \bar{\mathfrak{a}}_2:= \mathfrak{a}_2+[\mathfrak{a}_0,\mathfrak{b}_1] \, .$$
The essential step is now to choose \(\mathfrak{b}_1\) in such a way that $\bar{\mathfrak{a}}_1$ commutes with \(\mathfrak{a}_0\). 
This is possible because we assumed that \(\mathfrak{a}_0\) is semisimple in $\lieg$, so that  
$$\lieg=\im \ad_{\mathfrak{a}_0}\oplus \ker \ad_{\mathfrak{a}_0}\ \, \mbox{in which}\ \, \ad_{\mathfrak{a}_0}: \mathfrak{b}\mapsto [\mathfrak{a}_0, \mathfrak{b}], \, \lieg \to \lieg\, .$$
Hence, we can decompose $\mathfrak{a}_1=\mathfrak{a}_1^{\rm im} + \mathfrak{a}_1^{\rm ker}$ for unique $\mathfrak{a}_1^{\rm im}\in \im \ad_{\mathfrak{a}_0}$ and $\mathfrak{a}_1^{\rm ker}\in \ker \ad_{\mathfrak{a}_0}$. If we now choose $\mathfrak{b}_1$ so that $\ad_{\mathfrak{a}_0}(\mathfrak{b}_1)=\mathfrak{a}_1^{\rm im}$, then we obtain as a result that $\bar{\mathfrak{a}}_1 = \mathfrak{a}_1-[\mathfrak{a}_0, \mathfrak{b}_1] = \mathfrak{a}_1-\ad_{\mathfrak{a}_0}(\mathfrak{b}_1)=\mathfrak{a}_1 - \mathfrak{a}_1^{\rm im} = \mathfrak{a}_1^{\ker}$, and hence that 
$$[\mathfrak{a}_0, \bar{\mathfrak{a}}_1] = [\mathfrak{a}_0, \mathfrak{a}_1^{\rm ker}]=\ad_{\mathfrak{a}_0}(\mathfrak{a}_1^{\rm ker})=0\, .$$ 
With such choice of $\mathfrak{b}_1$, we obtain that $\overline A_S$ and $\overline A_N$ commute as required.  %becomes
\end{proofof}

\begin{remark}
In the process of normalizing $\mathfrak{a}_1$ into $\bar{\mathfrak{a}}_1=\mathfrak{a}_1-[\mathfrak{a}_0, \mathfrak{b}_1]$, we automatically change $\mathfrak{a}_2$ into $\bar{\mathfrak{a}}_2=\mathfrak{a}_2+[\mathfrak{a}_0, \mathfrak{b}_1]$. This means that $\bar{\mathfrak{a}}_2$ will in general not be zero if $\mathfrak{a}_2=0$. Thus, already when we put the linear part of equations (\ref{network3}) in normal form, we obtain a system that is not of the restricted form (\ref{network3}) but of the form (\ref{network}). 
\end{remark}
\noindent Our proof of Theorem \ref{linearnormalform} for general $n$ is similar to the above matrix computation for $n=2$. One could give this proof using the same matrix notation, but we found it more convenient to introduce a more abstract setting first.

\section{A semigroup ring}\label{semigroupringsection}
In our proof of Theorem \ref{linearnormalform} we shall make use of formal expressions of the form 
$$\sum_{i=0}^n \mathfrak{a}_i \sigma_i \ \mbox{with}\ \mathfrak{a}_i \in \lieg\ \mbox{and} \ \sigma_i\in \Sigma\, .$$
We shall denote the space of such expressions as
$$M=\lieg \Sigma = \left\{ \sum_{i=0}^{n} \mathfrak{a}_i \sigma_i \, | \, \mathfrak{a}_i\in \lieg, \sigma_i\in \Sigma \right\}\, .$$
Moreover, when 
 $$A=\sum_{i=0}^n\mathfrak{a}_i\sigma_i\ \mbox{and}\ B=\sum_{j=0}^n\mathfrak{b}_j\sigma_j $$
are two elements of $M$, then we define their product in $M$ as  
\begin{align}\label{Mproduct}
 A \cdot B :=& \sum_{i=0}^n\sum_{j=0}^n (\mathfrak{a}_i \mathfrak{b}_j)(\sigma_i \sigma_j)  = \sum_{i=0}^n\sum_{j=0}^n (\mathfrak{a}_i \mathfrak{b}_j)\sigma_{{\rm min}\{i+j,n\}}\, .
 \end{align}
The collection $M$ is an example of a semigroup ring, cf. \cite[Example 1.4]{MR1838439}. It can be vieved as a module over the ring $\lieg$ with basis $\Sigma$, and also as a representation of $\Sigma$. For us it will suffice to think of $M$ as an associative algebra that inherits its multiplication from $\lieg$ and $\Sigma$. 

One can remark that this algebra is graded: if we define for $0\leq k \leq n$
 the collection
$$M_k: = \lieg \{\sigma_k, \ldots, \sigma_n\} = \left\{ \sum_{i=k}^{n} \mathfrak{a}_i \sigma_i \, | \, \mathfrak{a}_i\in \lieg\right\} \subset M\, ,$$
then $M_0=M$ and for any $0\leq k,l \leq n$ it holds by (\ref{Mproduct}) that 
$$M_k \cdot M_l\subset M_{\min\{k+l,n\}}\, .$$ 
Thus, each $M_k$
is an ideal in \(M\) and
$$M=M_0\supset M_1\supset\cdots \supset M_{n}\supset 0$$ is a filtration.

$M$ also has the structure of a Lie algebra. Using that $\Sigma$ is commutative, we find that the Lie bracket of two elements of $M$ is given by the relatively simple expression
\[
[A, B]_M:=A\cdot B-B\cdot A=  \sum_{i=0}^{n}\sum_{j=0}^{n}[\mathfrak{a}_i , \mathfrak{b}_j ] \sigma_{{\rm min}\{i+j, n\}}.
\]
The role of the semigroup ring $M$ is explained in the following proposition: %Correspondingly, the elements of $M$ may be thought of as vectors, like the eigenvectors in Example \ref{ex3}, but also as matrices, like in (\ref{matrix3}).
%At several points in this section we refer back to Example \ref{ex3} though, so that the reader
%can make the connection between the abstract approach and the concrete matrix calculation.
\begin{proposition}\label{homomorphismprop}
%Let $\mathfrak{a}_0, \ldots, \mathfrak{a}_n\in \lieg$. %Define $A\in M$ and $a:V^{n+1}\to V$ by
%$$A=\sum_{i=0}^n \mathfrak{a}_i\sigma_i \ \mbox{and}\ a(X)=\sum_{i=0}^n \mathfrak{a}_iX_i \, .$$
The assignment 
\begin{align}\label{matrixagainagain}
\sum_{i=0}^n \mathfrak{a}_i\sigma_i\mapsto \left( \begin{array}{rcccc} 
\mathfrak{a}_0 + \mathfrak{a}_1+\ldots + \mathfrak{a}_n & 0 & \cdots & 0& 0\\
\mathfrak{a}_1 + \ldots + \mathfrak{a}_n & \mathfrak{a}_0 &\cdots & 0& 0\\
\vdots\   & \vdots & \ddots & \vdots &  \vdots \\
\mathfrak{a}_{n-1}+\mathfrak{a}_n & \mathfrak{a}_{n-2} & \cdots  & \mathfrak{a}_0& 0\\
\mathfrak{a}_n & \mathfrak{a}_{n-1} & \cdots & \mathfrak{a}_1 & \mathfrak{a}_0
\end{array} \right)\, .
\end{align}
is a homomorphism of associative algebras from $M$ to $\mathfrak{gl}(V^{n+1})$.
\end{proposition}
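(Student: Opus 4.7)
The plan is to establish the two required properties of the assignment, which I will denote $\Phi : M \to \mathfrak{gl}(V^{n+1})$. Linearity over $\RR$ is immediate from the explicit form of the image matrix (\ref{matrixagainagain}): each entry is visibly $\RR$-linear in the coefficients $\mathfrak{a}_0, \ldots, \mathfrak{a}_n$. The essential content of the proposition is therefore multiplicativity, $\Phi(A \cdot B) = \Phi(A) \circ \Phi(B)$ for all $A, B\in M$, and this will be verified by a direct component-wise computation.

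The key preliminary observation is that the image matrix (\ref{matrixagainagain}) is nothing but the matrix of $\gamma_f : V^{n+1} \to V^{n+1}$ for the linear response function $f(X) = \sum_{i=0}^n \mathfrak{a}_i X_i$. Indeed, the definition (\ref{gammaf}) of $\gamma_f$ gives
$$\Bigl( \Phi\bigl(\sum_{i=0}^n \mathfrak{a}_i \sigma_i\bigr)\, x \Bigr)_j \;=\; (\gamma_f)_j(x) \;=\; \sum_{i=0}^n \mathfrak{a}_i\, x_{\sigma_i(j)}\, ,$$
for each $x \in V^{n+1}$ and each $0 \leq j \leq n$. This compact description of $\Phi$ on vectors drives the whole argument.

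By bilinearity of the identity to be proved, it suffices to check multiplicativity on pure monomials $A = \mathfrak{a}\sigma_i$ and $B = \mathfrak{b}\sigma_j$. Applying the formula above twice yields
$$\bigl(\Phi(\mathfrak{a}\sigma_i) \circ \Phi(\mathfrak{b}\sigma_j)\, x\bigr)_k \;=\; \mathfrak{a}\, \bigl(\Phi(\mathfrak{b}\sigma_j)\,x\bigr)_{\sigma_i(k)} \;=\; \mathfrak{a}\mathfrak{b}\, x_{\sigma_j(\sigma_i(k))}\, .$$
The semigroup identity $\sigma_j \circ \sigma_i = \sigma_{{\rm min}\{i+j,\,n\}}$ already noted in Section \ref{sec:01} rewrites $x_{\sigma_j(\sigma_i(k))}$ as $x_{\sigma_{{\rm min}\{i+j,\,n\}}(k)}$, so the right hand side is precisely $\bigl(\Phi\bigl((\mathfrak{a}\mathfrak{b})\, \sigma_{{\rm min}\{i+j,\,n\}}\bigr)\,x\bigr)_k$. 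By the definition (\ref{Mproduct}) of the product in $M$ this equals $\bigl(\Phi((\mathfrak{a}\sigma_i) \cdot (\mathfrak{b}\sigma_j))\,x\bigr)_k$, completing the monomial case and hence the verification of multiplicativity.

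I do not anticipate any substantial obstacle: the proof is a direct unfolding of definitions, powered entirely by the composition law $\sigma_j \circ \sigma_i = \sigma_{{\rm min}\{i+j,\,n\}}$. The one small pitfall worth flagging is the temptation to write $\Phi(\mathfrak{a}\sigma_i)$ as a product of a pointwise $\lieg$-action and the operator $A_{\sigma_i}$ of Theorem \ref{brackettheorem}; but $A_{\sigma_i}$ (which acts by $x_k\mapsto x_{{\rm min}\{k+i,\,n\}}$) provides a \emph{different} representation of $\Sigma$ on $V^{n+1}$ than the one implicit in $\Phi$ (which acts by $x_k\mapsto x_{\sigma_i(k)}$), so the direct component-wise calculation above is the cleanest route.
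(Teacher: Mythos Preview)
Your proof is correct and follows essentially the same route as the paper's own argument: both identify $\Phi(\mathfrak{a}_i\sigma_i)$ with the linear network map $\gamma(\mathfrak{a}_iX_i)$, verify the product rule on monomials via the semigroup identity $\sigma_i\sigma_j=\sigma_{\min\{i+j,n\}}$, and extend by bilinearity. The only cosmetic difference is that the paper names the explicit matrices $\mathfrak{a}_iN_i$ and asserts $(\mathfrak{a}_iN_i)(\mathfrak{b}_jN_j)=(\mathfrak{a}_i\mathfrak{b}_j)N_{\min\{i+j,n\}}$ directly, whereas you supply that computation by acting on vectors.
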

\begin{proof}
For $0\leq i \leq n$, let us define the $(n+1)\times(n+1)$ matrix
\begin{align} 
\mathfrak{a}_iN_i:= \left( \begin{array}{ccccccc} 
\mathfrak{a}_i & 0 & \cdots & 0& 0& \cdot & 0\\
\mathfrak{a}_i & 0 &\cdots & 0& 0 & \cdots & 0\\
\vdots\   & \vdots & \ddots & \vdots &  \vdots & \ddots & \vdots \\
\mathfrak{a}_i  & 0 & \cdots  & 0& 0& \cdots & 0\\
0 & \mathfrak{a}_i & \cdots & 0 & 0& \cdots & 0 \\
\vdots & \vdots & \ddots & \vdots & \vdots & \ddots & \vdots \\
0 & \vdots & 0& \mathfrak{a}_i &  0 & \cdots & 0
\end{array} \right) \, . \nonumber \\ \nonumber 
\underbrace{\ \ \ \ \ \ \  \ \ \ \ \ \ \ \ \ }\ \ \ \ \
\\
i \ \mbox{zeros}\ \ \ \ \ \ \ \ \  \nonumber 
\end{align}
With this definition, $\mathfrak{a}_i N_i$ is the matrix of the linear network vector field $\gamma\left(\mathfrak{a}_i X_i\right)$, where by $\mathfrak{a}_i X_i: V^{n+1}\to V$ we denote the map $X\mapsto \mathfrak{a}_i X_i$. 

Moreover, the assignment of the proposition is given by
$$M\ni \sum_{i=0}^n \mathfrak{a}_i\sigma_i\mapsto \sum_{i=0}^n \mathfrak{a}_i N_i \in \mathfrak{gl}(V^{n+1})\, .$$
It is easy to compute that 
$$(\mathfrak{a}_iN_{i})(\mathfrak{b}_jN_j)= (\mathfrak{a}_i\mathfrak{b}_j)N_{{\rm min}\{i+j, n\}}\, .$$
By distributivity of the matrix product, it thus follows that 
$$\left(\sum_{i=0}^n \mathfrak{a}_i N_i\right)\left(\sum_{j=0}^n \mathfrak{b}_j N_j\right) = \sum_{i=0}^n\sum_{j=0}^n (\mathfrak{a}_i \mathfrak{b}_j) N_{{\rm min}\{i+j, n\}}\, .$$
This product is homomorphic to the product in the semigroup ring (\ref{Mproduct}).
\end{proof}
It clearly has notational advantages to represent matrices of the form (\ref{matrixagain}) by elements of the semigroup ring, so this is what we will do in the remainder of this section. We also choose to perform the matrix computations that are necessary for the proof of Theorem \ref{linearnormalform} inside the semigroup ring $M$ and not inside $\mathfrak{gl}(V^{n+1})$. 

But we stress that Proposition \ref{homomorphismprop} proves it justified to think of $\sum_{i=0}^n \mathfrak{a}_i\sigma_i\in M$ as the $(n+1)\times(n+1)$-matrix $\sum_{i=0}^n \mathfrak{a}_iN_i$, which in turn is the matrix of the map
$$\gamma\left(\sum_{i=0}^n \mathfrak{a}_i X_i\right): V^{n+1}\to V^{n+1}\, .$$ % \mbox{with}\ a:V^{n+1}\to V\ \mbox{defined by}\ a(X):=\sum_{i=0}^n \mathfrak{a}_iX_i\, .$$

\begin{proofof}{Theorem \ref{linearnormalform}}
Let us define for $0\leq i \leq n$, 
$$\mu_i:=\sigma_i-\sigma_{n}\in M_i\, .$$
% and \(X=\sum_{i=1}^{n-2} x_i \mu_i\in M_1\).
Then it holds that \(\mu_n=0\) and that
$$\mu_i \mu_j = (\sigma_i-\sigma_n)(\sigma_j-\sigma_n)  = \sigma_{{\rm min}\{j+i, n\}} - \sigma_n =\mu_{{\rm min}\{i+j,n\}}\, .$$ 
In particular, every $\mu_i$ is nilpotent in $M$. We also remark that the collection 
$$\mu_0, \ldots, \mu_{n-1}, \sigma_{n}$$
is a $\lieg$-basis for the module $M$.

Let us now assume that for some $1\leq k\leq n-1$ there exist elements of the form 
$$G_1=\mathfrak{b}_1\mu_1, \ldots, G_{k-1} = \mathfrak{b}_{k-1}\mu_{k-1} \in M$$ such that
$$\tilde A := \left(e^{G_{k-1}}\cdots e^{G_1}\right)\cdot A \cdot \left(e^{-G_1}\cdots e^{-G_{k-1}}\right) =  \sum_{i=0}^{n} \tilde{\mathfrak{a}}_i \sigma_i $$
has the property that $\tilde{\mathfrak{a}}_0=\mathfrak{a}_0$ and $\ad_{\mathfrak{a}_0}(\tilde{\mathfrak{a}}_i)=[\mathfrak{a}_0, \tilde{\mathfrak{a}}_i]=0$ for all $1\leq i\leq k-1$.

%Let \(\Sigma\) be a finite semigroup, generated by the identity element \(e\) and some \(\sigma\), of order \(n\),
%such that \(\sigma^i=\sigma^{n-1}\) for \(i\geq n-1\).
%\begin{remark}\label{rem:1}
%In the previous section \(\sigma\) could be represented by
%$$
%\left( \begin{array}{rrr} 1& 0 & 0 \\ 1 & 0 & 0 \\ 0 & 1 & 0 \end{array}\right) 
 %,$$
%and the general element is then \(a e + b \sigma+ c\sigma^2\).
%\end{remark}

Then we pick $\mathfrak{b}_k\in \lieg$ arbitrary and define $$G_k:=\mathfrak{b}_k\mu_k\, .$$
Clearly, $G_k$ is nilpotent in $M$ and hence \(e^{G_k}\) is a finite series expansion and easy to compute.
One finds that
\begin{align}\nonumber
 e^{G_k}\cdot \tilde A\cdot e^{-G_k}&=e^{\mathrm{ad}^M_{G_k}}(A) \\ \nonumber
&= \sum_{j=0}^\infty\frac{1}{j!}\left(\mathrm{ad}^M_{G_k}\right)^j(\tilde A)\\ \nonumber 
&= \sum_{j=0}^\infty\frac{1}{j!}\sum_{i=0}^n \ad_{\mathfrak{b}_k}^j(\tilde{\mathfrak{a}}_i)\mu_k^j\sigma_i\\ \nonumber
&= \sum_{i=0}^k\tilde{\mathfrak{a}}_i\sigma_i -  [\mathfrak{a}_0, \mathfrak{b}_k]\sigma_k \!\!\!\! \mod M_{k+1} . \nonumber 
\end{align}
%This leads to the homological equation \(\mathrm{ad}_\lieg(\mathfrak{a}_0)x_p=\mathfrak{a}_p\).
%Let \(\mathfrak{a}_0=s_0+n_0\) be the \(SN\)-decomposition in \(\gl(V,\R)\).
%Then 
%\(\lieg=\mathrm{im\ }\mathrm{ad}_\lieg(\mathfrak{a}_0)\oplus \mathfrak{N}\) , with \(\mathfrak{N}\subset\ker s_0\),
As was explained in Section \ref{sec:0}, we can now choose $\mathfrak{b}_k$ in such a way that $\bar{\mathfrak{a}}_k:= \tilde{\mathfrak{a}}_k  -  [\mathfrak{a}_0, \mathfrak{b}_k]$ commutes with $\mathfrak{a}_0$, because  $\mathfrak{a}_0$ is semisimple. 

This proves, by induction, that $A=\sum_{j=0}^n\mathfrak{a}_j\sigma_j$ can be normalized into 
$$\overline A=\mathfrak{a}_0\sigma_0+\sum_{i=1}^n\bar{\mathfrak{a}}_i\sigma_i \ \mbox{such that}\ [\mathfrak{a}_0, \bar{\mathfrak{a}}_i]=0 \ \mbox{for}\ 1\leq i\leq n-1\, .$$
Next, let us decompose $\overline A = \mathfrak{a}_0\sigma_0 + \sum_{i=1}^n \bar{\mathfrak{a}}_i\sigma_i$ as $\overline A=\overline A_S + \overline A_N$ with  
$$\overline A_S=\mathfrak{a}_0 \sigma_0 +\left(\sum_{i=1}^{n} \bar{\mathfrak{a}}_i \right) \sigma_{n} \ \mbox{and}\  
\overline A_N=\sum_{i=1}^{n-1} \bar{\mathfrak{a}}_i \left( \sigma_i - \sigma_n\right) = \sum_{i=1}^{n-1} \bar{\mathfrak{a}}_i\mu_i  \, .$$
Then, because $\sigma_n\mu_i=0$, it is clear that \([\overline A_S, \overline A_N]_M=0\), that \(\overline A_N\) is nilpotent, and that
$$\overline A_S \mu_i=\mathfrak{a}_0 \mu_i \ \mbox{for}\ i=0,\cdots,n-1\ \mbox{and}\ \overline A_S\sigma_{n}=
\left(\mathfrak{a}_0+\sum_{i=1}^{n-1} \bar{\mathfrak{a}}_i\right) \sigma_{n}\, .$$ 
This shows that $\mu_0, \ldots, \mu_{n-1}$ and \(\sigma_{n}\) form a basis of ``eigenvectors'' of \(\overline A_S\) with respectively the ``eigenvalues'' $\mathfrak{a}_0, \cdots,\mathfrak{a}_0$ and $\mathfrak{a}_0+\sum_{i=1}^{n} \bar{\mathfrak{a}}_i$.
In particular, \(\overline A_S\) is semisimple if these eigenvalues are semisimple in $\lieg$.
%Notice that \(P_0=e-\sigma_{n}\) and \(P_1=\sigma_{n}\) are projection operators,
%that is \(P_0^2=P_0, P_0 P_1=0, P_1^2=P_1\), with corresponding eigenvalues \(\mathfrak{a}_0\) and \(\mathfrak{a}_0+\sum_{i=1}^{n} \bar{a}_i\).
\end{proofof}
%\begin{remark}
%In the previous section we encountered 
%$$1+x\mu_1=\left( \begin{array}{rrr} 1 & 0 & 0 \\ 0 & 1 & 0 \\ -x & x & 1 \end{array}\right)  $$
%instead of \(\exp(x\mu_1)\) as near-identity transformation.
%Advantages of the Lie algebraic approach are that the inverse of \(\exp(X)\) is simple to determine,
%contrary to that of \(1+x\mu_1\), and the use of the little ad-big Ad lemma in the formula above.
%\end{remark}
\begin{remark}
If one wishes, one can also bring \(\mathfrak{a}_0\) in Jordan normal form.
Indeed, if \(\mathfrak{b}_0\in \lieg\) is an invertible map for which \(\mathfrak{b}_0\mathfrak{a}_0 \mathfrak{b}_0^{-1}\) is in Jordan normal form with respect to some basis of $V$, then one can
define $G_0=\mathfrak{b}_0\sigma_0$. We then observe that $G_0$ is invertible in $M$, with $G_0^{-1}=\mathfrak{b}_0^{-1}\sigma_0$, and that $$G_0\cdot A \cdot G_0^{-1} =
\sum_{i=0}^{n} (\mathfrak{b}_0 \mathfrak{a}_i \mathfrak{b}_0^{-1}) \ \sigma_i \, .$$ 
This shows that one may assume that the matrix of $\mathfrak{a}_0$ is given in Jordan normal form.
\end{remark}

\begin{remark} 
For the proof of Theorem \ref{linearnormalform}, the assumption that \(\mathfrak{a}_0\) is semisimple is essential. If $\mathfrak{a}_0$ is not semisimple, then it can in general not be arranged that $[\mathfrak{a}_0, \bar{\mathfrak{a}}_i]=0$ and hence it is not clear that \([\overline A_S, \overline A_N]_M=0\) either.
%if one would define
%\(A_S=s_0 e +\sum_{i=1}^{n} \bar{a}_i \sigma_{n-1}\) and 
%\(A_N=n_0 e +\sum_{i=1}^{n-1} \bar{a}_i \mu_i\).
%However, one should notice that \([A_S,A_N]_M\in M_{n}\), so it is small in the filtration sense.
\end{remark}

\begin{remark}
We stress that for $\overline A_S$ to be truly semisimple, it is necessary that its eigenvalues $\mathfrak{a}_0, \mathfrak{a}_0+\bar{\mathfrak{a}}_1+\ldots + \bar{\mathfrak{a}}_n$ are semisimple in $\lieg$. Nevertheless, it will become clear that this is not important for us.
%If one would do the \(SN\)-decomposition on the matrix in \(\gl(V^{n+1},\R)\),
%the result would be the same, since the decomposition is unique.
\end{remark}
\begin{remark}
One could also try to normalize \(\mathfrak{a}_{n}\) by conjugating with an element of the form $e^{G_n}$, with $G_n=\mathfrak{b}_n\sigma_n$ for some $\mathfrak{b}_n\in \lieg$. This indeed transforms $\bar{\mathfrak{a}}_n$ further, but we remark that this normalization is:
\begin{itemize}
\item[1.]
Much more difficult to carry out than the normalization of the $\mathfrak{a}_i$ with $1\leq i\leq n-1$, because $\sigma_n$ is not nilpotent but idempotent.
\item[2.]
Not necessary for the computation of the \(SN\)-decomposition of $A$. 
Recall that we are interested in this $SN$-decomposition for the computation of a nonlinear
normal form. %We already saw in section \ref{sec:1} that this last normalization was not necessary
%for the \(SN\)-decomposition.
\end{itemize}
For us, the equivalent of the Jordan normal form of \(A=\sum_{i=0}^n\mathfrak{a}_i\sigma_i\) will therefore be a matrix of the form
\(\overline A=\mathfrak{a}_0\sigma_0 +\sum_{i=1}^{n} \bar{\mathfrak{a}}_i \sigma_i$ with $[\mathfrak{a}_0, \bar{\mathfrak{a}}_i]=0$ for $1\leq i\leq n-1$. Since we do not normalize \(\bar{\mathfrak{a}}_{n}\), we call $\overline A$ an {\em almost normal form} of \(A\).
\end{remark}

\section{Analysis of the nonlinear normal form}\label{sec:4}
%We now switch to symbolic notation (cf. \cite{CCN}) in order to compute
%the bracket of the semisimple part with an arbitrary cellular vector field.
%Let \(S=\mathfrak{a}_0 X_1 + \sum_{i=1}^{n-1} \bar{a}_i X_n\).
%Then
%\begin{eqnarray*}
%[S,f]_\Sigma&=&\sum_{j=1}^n D_j S \cdot (f \circ A_{\sigma_j}) -  D_j f \cdot (S\circ A_{\sigma_j}) 
%\\&=&\mathfrak{a}_0 f(X_1,\cdots,X_n)+\sum_{i=1}^{n-1} \bar{a}_i f(X_n,\cdots,X_n)
%-\sum_{j=1}^n D_j f \mathfrak{a}_0 X_j  - \sum_{j=1}^n D_j f  \sum_{i=1}^{n-1} \bar{a}_i X_n.
%\\&=&\ad_\lieg(\mathfrak{a}_0)f+\sum_{i=1}^{n-1} \bar{a}_i f(X_n,\cdots,X_n)- \sum_{j=1}^n D_j f  \sum_{i=1}^{n-1} \bar{a}_i X_n.
%\end{eqnarray*}
%Let \(\mathcal{I}\) be the space of vectorfields vanishing for \(X_n=0\).
%Then the expression for the \([S,f]_\Sigma\) implies that we can compute the normal form modulo \(\mathcal{I}\)
%using the classical \(\ad_\lieg(\mathfrak{a}_0)f\) expression and that we can characterize the normal form with respect to
%\(S\) as the kernel of \(\ad_\lieg(\mathfrak{a}_0)\mod \mathcal{I}\) (assuming of course that \((0,\ldots,0)\) is an equilibrium). 
%This means that we can immediately use results from classical normal form theory.

In this section, we prove Theorem \ref{mainthm} of the introduction, that is formulated more precisely as Theorem \ref{mainthm2} below. This theorem generalizes the results in \cite{elmhirst}, \cite{claire} and \cite{golstew} to feed-forward chains of arbitrary length. We like to point out that, except for the linear algebra, various computational aspects of our proof are the same as those given in \cite{golstew}.

Before we formulate Theorem \ref{mainthm2}, we remark that when $V=\R^2$ and $\mathfrak{a}_0(0): V\to V$ has nonzero eigenvalues $\pm i\omega_0$, then we may assume that $\mathfrak{a}_0(0)$ is given in Jordan normal form
$$\mathfrak{a}_0(0)=\left(\begin{array}{cc} 0 & -\omega_0\\ \omega_0 & 0\end{array}\right)\, .$$
As a consequence, it is then convenient to identify $X= (X^1,X^2)\in \R^2$ with the complex number $X^1+iX^2\in \C$. This turns the linear map $\mathfrak{a}_0(0):\R^2\to\R^2$ into the multiplication by $i\omega_0$ from $\C$ to $\C$ and allows for Theorem \ref{mainthm2} to be formulated conveniently as follows:

\begin{theorem}\label{mainthm2}
%Let \(1<r\leq n\) be a natural number. 
Let $V=\R^2\cong \C$ and $f:V^{n+1}\times \R \to V$ a smooth function. Assume that $f(0;\lambda)=0$ and that $\mathfrak{a}_0(0)$ has eigenvalues $\pm i\omega_0\neq 0$. Furthermore, assume the following generic conditions on $f(X; \lambda)$:
\begin{itemize}
\item[1.] Persistence of the steady state: $\mathfrak{a}_0(0)+\ldots +\mathfrak{a}_n(0)$ is invertible.
\item[2.] Eigenvalue crossing: $\left. \frac{d}{d\lambda}\right|_{\lambda=0}{\rm tr}\ \mathfrak{a}_0(\lambda)\neq 0$.
\item[3.] Nilpotency: ${\rm tr}\, \mathfrak{a}_1(0) \neq 0$.
\item[4.] Nonlinearity: $\frac{\partial^3 ({\rm Re}\, \overline f)(0;0)}{\p ({\rm Re}\, X_0)^3}\neq 0$.
\end{itemize}
Then the local normal form of equations (\ref{network}) near $(x; \lambda)=(0;0)$ supports $n$ branches of periodic solutions (counted excluding the trivial steady state $x(t)\equiv 0$). They are given for $r=1, \ldots, n$ and for certain $B_1(\lambda), \ldots, B_n(\lambda)\in \C$ by
$$x_0(t)= \ldots = x_{r-1}(t)\equiv 0, x_{r}(t) = B_1(\lambda) e^{i\omega(\lambda) t}, \ldots, x_n(t)= B_{n-r+1}(\lambda) e^{i\omega(\lambda)t}\, .$$  
Frequency and amplitudes satisfy \(\omega(\lambda) = \omega_0+\mathcal{O}(|\lambda|) \) and
\( |B_i(\lambda)| \sim |\lambda|^{\kappa_i}\) with \(\kappa_i:=\frac{1}{2}\frac{1}{ 3^{i-1}}\) for $i=1,\ldots,n$. These branches are hyperbolic if and only if $\mathfrak{a}_0(0)+\ldots+\mathfrak{a}_n(0)$ is hyperbolic. 
\end{theorem}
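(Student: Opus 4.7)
My plan is to mirror the induction in the proof of Theorem \ref{mainthm1}, replacing the steady state equations by relative-equilibrium equations obtained from the normal form $S^1$-symmetry. I would first apply Theorem \ref{linearnormalform} to bring the linearization into almost normal form so that $[\mathfrak{a}_0(0),\bar{\mathfrak{a}}_i(0)]=0$ for $i=1,\ldots,n-1$, and then apply Theorem \ref{normalformtheoremparameters} to sufficiently high order $(r_1,r_2)$ to put $f$ into a normal form $\overline f$ satisfying the invariance (\ref{invariancef}):
$$\overline f(e^{is}X_0,\ldots,e^{is}X_{n-1},0;\lambda)=e^{is}\,\overline f(X_0,\ldots,X_{n-1},0;\lambda).$$
Identifying $V\cong\C$ so that $\mathfrak{a}_0(0)$ becomes multiplication by $i\omega_0$, this invariance forces every $\bar{\mathfrak{a}}_i(\lambda)$ with $1\le i\le n-1$ to act as multiplication by a complex scalar $\alpha_i(\lambda)\in\C$ and forbids all even-order terms in $\overline f$. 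Under the ansatz $x_0=\cdots=x_{r-1}\equiv 0$, $x_{r+k}(t)=B_{k+1}e^{i\omega(\lambda)t}$, the equivariance reduces the periodic-solution problem to the algebraic system
$$i\omega B_{k+1}=\overline f(B_{k+1},B_k,\ldots,B_1,0,\ldots,0;\lambda),\qquad k=0,\ldots,n-r.$$

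For $k=0$ this is the classical Hopf equation $i\omega B_1=\alpha_0(\lambda)B_1+c|B_1|^2B_1+\cdots$, where $c$ denotes the resonant cubic coefficient; by condition 4 one has $\mathrm{Re}(c)\ne 0$, and by condition 2 one has $\mathrm{Re}(\alpha_0'(0))\ne 0$. Fixing the $S^1$-phase by $B_1\in\R_{>0}$, separating real and imaginary parts, dividing by $B_1$ and applying the implicit function theorem yields $B_1\sim|\lambda|^{1/2}$ and $\omega(\lambda)=\omega_0+\mathcal{O}(|\lambda|)$, exactly as in the standard Hopf bifurcation.

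For $k\ge 1$ I would proceed by induction: assume the existence of smooth functions $Y_i^*$ with $Y_i^*(0)\ne 0$ and $B_i=|\lambda|^{\kappa_i}Y_i^*(|\lambda|^{\kappa_i})$ for $1\le i\le k$. Introducing the rescaled parameter $\mu:=|\lambda|^{\kappa_{k+1}}$ and the rescaled unknown $B_{k+1}=:\mu Y_{k+1}$, the relation $\kappa_i/\kappa_{k+1}=3^{k+1-i}$ gives $B_i=\mu^{3^{k+1-i}}Y_i^*(\mu^{3^{k+1-i}})$ for $i\le k$. Substituting into the $(k+1)$-th equation, every term except $\alpha_1(\lambda)B_k$ and the resonant cubic $c|B_{k+1}|^2B_{k+1}$ is of order strictly higher than $\mu^3$: the linear term $(i\omega-\alpha_0(\lambda))B_{k+1}$ carries a factor $i\omega-\alpha_0(\lambda)=\mathcal{O}(|\lambda|)=\mathcal{O}(\mu^{2\cdot 3^k})$ coming from the first equation, the linear couplings $\alpha_j(\lambda)B_{k-j+1}$ with $j\ge 2$ are of order $\mu^{3^j}$, and every non-resonant equivariant cubic $X_aX_b\bar X_c$ contributes at least $\mu^5$. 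Dividing by $\mu^3$, the leading equation becomes
$$\alpha_1(0)\,Y_k^*(0)+c(0)\,|Y_{k+1}|^2\,Y_{k+1}=0,$$
in which $\alpha_1(0)\ne 0$ by condition 3. Since $Y\mapsto|Y|^2Y$ is a real-analytic bijection of $\C\setminus\{0\}$ with invertible real Jacobian, the implicit function theorem produces a smooth $Y_{k+1}(\mu)$ with $Y_{k+1}(0)\ne 0$, closing the induction and giving $|B_{k+1}|\sim|\lambda|^{\kappa_{k+1}}$.

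Finally, for hyperbolicity I would compute the Jacobian of the rotating-frame vector field $\gamma_{\overline f}(\cdot;\lambda)-i\omega(\lambda)\,\mathrm{id}$ at the relative equilibrium $(0,\ldots,0,B_1,\ldots,B_{n-r+1})$. This Jacobian inherits the block lower triangular feed-forward structure and so its spectrum is the union of its diagonal block spectra: the synchronous direction contributes the eigenvalues of $\mathfrak{a}_0(0)+\cdots+\mathfrak{a}_n(0)-i\omega\,\mathrm{id}+\mathcal{O}(|\lambda|)$, the $r-1$ trivial blocks contribute $\mathfrak{a}_0(\lambda)-i\omega\,\mathrm{id}$ with eigenvalues of order $|\lambda|$ (nonzero by the Hopf crossing together with the first-equation identity $\mathrm{Re}(\alpha_0(\lambda))=-\mathrm{Re}(c)|B_1|^2+\cdots$), and the remaining blocks contribute $\mathfrak{a}_0(\lambda)-i\omega\,\mathrm{id}+3c|B_i|^2+\cdots$ with eigenvalues of order $|\lambda|^{2\kappa_i}$ that are nonzero thanks to the non-degenerate cubic balance. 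Hence the branch is hyperbolic precisely when $\mathfrak{a}_0(0)+\cdots+\mathfrak{a}_n(0)$ is. The main obstacle in carrying out this program is the scaling bookkeeping in the inductive step: one must check that after the rescaling $\mu=|\lambda|^{\kappa_{k+1}}$ all terms other than the resonant cubic $c|X_0|^2X_0$ and the first feed-forward coupling $\alpha_1(\lambda)X_1$ become strictly subdominant, which depends crucially on the superexponential relation $\kappa_i=3^{k+1-i}\kappa_{k+1}$ and on the $S^1$-symmetry ruling out the quadratic terms that would otherwise spoil this balance (and produce the $\kappa_k/2$ rate of Theorem \ref{mainthm1}).
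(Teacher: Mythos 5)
Your proposal is correct and follows essentially the same route as the paper's proof: linear almost normal form via Theorem \ref{linearnormalform}, the $S^1$-equivariant nonlinear normal form via Theorem \ref{normalformtheoremparameters}, the consecutive relative-equilibrium equations with the rescaling $\mu=|\lambda|^{\kappa_{k+1}}$, the leading balance $\alpha_1(0)Y_k^*(0)+c(0)|Y_{k+1}|^2Y_{k+1}=0$ resolved by the implicit function theorem (the paper likewise computes $\det D h=3|C|^2|z|^4\neq 0$), and hyperbolicity read off from the lower triangular Jacobian in the rotating frame. The scaling bookkeeping you flag as the main obstacle is handled in the paper exactly as you sketch it, so there is nothing essential missing.
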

Before we prove Theorem \ref{mainthm2}, let us summarize the results of the previous sections as follows:
\begin{proposition} Under the assumptions of Theorem \ref{mainthm2}, the vector field $\gamma_f$ admits a local normal form $\gamma_{\overline f}$ near $(x;\lambda)=(0;0)$ for which it holds that %\begin{align}\label{invarianceatxis0}
%\overline f(e^{sa(0)}X_0, \ldots, e^{sa(0)}X_{n-1}, 0; \lambda) = e^{sa(0)}\overline f(X_0, \ldots, X_{n-1}, 0; \lambda)\ \mbox{for all}\ s\in \R\, .
%\end{align}
\begin{align}\label{invariancecomplex}
\overline f(e^{i\omega_0s}X_0, \ldots, e^{i\omega_0s}X_{n-1}, 0; \lambda) = e^{i\omega_0s}\overline f(X_0, \ldots, X_{n-1}, 0; \lambda)\ \mbox{for all}\ s\in \R \, .
\end{align}
\end{proposition}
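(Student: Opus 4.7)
The plan is to combine the linear normal form of Theorem \ref{linearnormalform} with the nonlinear normal form of Theorem \ref{normalformtheoremparameters}, and then unpack the resulting abstract symmetry into the concrete identity (\ref{invariancecomplex}). First I would verify that the hypothesis of Theorem \ref{linearnormalform} holds: $\mathfrak{a}_0(0)$ has two distinct eigenvalues $\pm i\omega_0$, so it is semisimple in $\lieg$. Applying that theorem yields a linear conjugation which brings the linearization into $\overline A = \overline A_S + \overline A_N$, with $[\mathfrak{a}_0(0), \bar{\mathfrak{a}}_i(0)] = 0$ for $1 \leq i \leq n-1$ and $\overline A_S$ in the explicit block form stated there.

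Next I would apply Theorem \ref{normalformtheoremparameters} to the vector field obtained after this linear step, up to any order $(r_1, r_2)$ that will be needed for Theorem \ref{mainthm2}. This produces a conjugate vector field $\gamma_{\overline f}$ whose Taylor terms $\gamma_{\overline f_{k,l}}$ commute with the flow $e^{s\overline A_S}$. The decisive step is then to observe what $e^{s\overline A_S}$ actually does on phase space. Reading off the block form of $\overline A_S$, the subspace $\{x_0 = 0\} \subset V^{n+1}$ is invariant, and on this subspace $\overline A_S$ acts as the diagonal map $(0, x_1, \ldots, x_n) \mapsto (0, \mathfrak{a}_0(0) x_1, \ldots, \mathfrak{a}_0(0) x_n)$. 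Since $V \cong \C$ and $\mathfrak{a}_0(0)$ is multiplication by $i\omega_0$, the flow $e^{s\overline A_S}$ restricted to $\{x_0 = 0\}$ is componentwise multiplication by $e^{i\omega_0 s}$.

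Finally, I would write the commutation $e^{s\overline A_S}\gamma_{\overline f}(x) = \gamma_{\overline f}(e^{s\overline A_S}x)$ componentwise at a point with $x_0 = 0$. Because $\overline f(0;\lambda) = 0$, the zeroth component $(\gamma_{\overline f}(x))_0 = \overline f(x_0, \ldots, x_0;\lambda)$ vanishes and hence $\gamma_{\overline f}(x)$ again lies in $\{x_0 = 0\}$, so the left-hand side's $n$-th component equals $e^{i\omega_0 s}\overline f(x_n, x_{n-1}, \ldots, x_1, 0;\lambda)$. On the right-hand side, using $(\gamma_{\overline f}(y))_n = \overline f(y_n, y_{n-1}, \ldots, y_1, y_0;\lambda)$ together with $\sigma_n(n) = 0$ (so the last slot is $y_0 = 0$), the $n$-th component reads $\overline f(e^{i\omega_0 s}x_n, e^{i\omega_0 s}x_{n-1}, \ldots, e^{i\omega_0 s}x_1, 0;\lambda)$. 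Setting $X_k := x_{n-k}$ for $0 \leq k \leq n-1$ yields exactly (\ref{invariancecomplex}). The point that needs care is the reason for the forced $0$ in the last slot: $e^{s\overline A_S}$ acts on the fully synchronous direction by the possibly distinct exponential $e^{s(\mathfrak{a}_0(0) + \sum_i \bar{\mathfrak{a}}_i(0))}$, so one cannot hope for a pure rotation on all of $V^{n+1}$; the semigroup feature $\sigma_n \equiv 0$ feeds $x_0$ into the last argument of $f$ at every node, and restricting to $\{x_0 = 0\}$ is precisely what makes the commutation clean and the last slot collapse to zero simultaneously.
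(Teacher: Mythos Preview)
Your proposal is correct and follows essentially the same route as the paper: first the linear almost-normal-form of Theorem \ref{linearnormalform} (using that $\mathfrak{a}_0(0)$, having distinct eigenvalues $\pm i\omega_0$, is semisimple), then Theorem \ref{normalformtheoremparameters}, and finally the restriction to $\{x_0=0\}$ to read off (\ref{invariancecomplex}) from the $n$-th component. Your componentwise unpacking of the commutation is in fact more explicit than the paper's.

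The one point the paper handles a shade more carefully is the passage ``the Taylor terms commute with $e^{s\overline A_S}$''. Theorem \ref{normalformtheoremparameters} gives commutation with the flow of the \emph{true} semisimple part of $\overline A$, and $\overline A_S$ need not be semisimple on all of $V^{n+1}$ (this fails precisely when $\mathfrak{a}_0(0)+\bar{\mathfrak{a}}_1(0)+\ldots+\bar{\mathfrak{a}}_n(0)$ is not semisimple). The paper closes this by observing that $V_0^{n+1}=\{x_0=0\}$ is $\overline A$-invariant and that $\overline A_S|_{V_0^{n+1}}$ is genuinely semisimple, so by uniqueness of the $SN$-decomposition the true semisimple part of $\overline A$ restricts to $\overline A_S|_{V_0^{n+1}}$. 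Hence the commutation, restricted to $V_0^{n+1}$, is exactly with $e^{s\overline A_S}|_{V_0^{n+1}}$, which is all that is used. Your final paragraph already identifies the underlying issue (the distinct exponential on the synchronous direction and the need to restrict to $x_0=0$), so the argument is complete once this identification of restrictions is made explicit.
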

\begin{proof}
By Theorem \ref{linearnormalform} there exist linear maps $g_i:V^{n+1}\to V$ ($1\leq i \leq n-1$) of the form 
$$g_i(X)=\mathfrak{b}_i(X_i-X_n) \ \mbox{so that}\  e^{\gamma_{g_{n-1}}} \circ \ldots \circ e^{\gamma_{g_{1}}}: V^{n+1}\to V^{n+1}$$ 
transforms $A=D_x\gamma_f(0;0) = \gamma_{f_{0,0}}$ into 
$$\overline A:= \left( e^{\gamma_{g_{n-1}}} \circ \ldots \circ e^{\gamma_{g_{1}}} \right)\circ A\circ \left( e^{-\gamma_{g_{1}}} \circ \ldots \circ e^{-\gamma_{g_{n-1}}} \right) \ \mbox{in ``almost normal form''.}$$ %This $B$ leaves $\{x_0=0\}$ invariant, hence so does $e^B$. 
This means that $\overline A$ decomposes as $\overline A=\overline A_S+\overline A_N$ with $\overline A_N$ nilpotent, $[\overline A_S, \overline A_N]=0$ and
$$ \overline A_S (x) =  (0, \mathfrak{a}_0(0)x_1, \ldots, \mathfrak{a}_0(0)x_n)  + \mathcal{O}(x_0)  \cong (0, i\omega_0 x_1, \ldots, i\omega_0 x_n) + \mathcal{O}(x_0)  \, .$$
In particular, we find that the subspace 
$$V^{n+1}_0:=\{x\in V^{n+1}\, |\, x_0=0\}$$ is invariant under both $\overline A_S$ and $\overline A_N$. 

It is clear that the restriction of $\overline A_S$ to $V_0^{n+1}$ is semisimple. Because $\overline A_N$ is nilpotent, this proves that $\overline A = \overline A_S+\overline A_N$ is the $SN$-decomposition of the restriction of $\overline A$ to $V^{n+1}_0$. 
%the flow of $\gamma_{f}$. For the same reason, this subspace is invariant under
%because $\tilde f(0;\lambda) = f(0;\lambda)=0$,

Because the transformation $e^{\gamma_{g_{n-1}}} \circ \ldots \circ e^{\gamma_{g_{1}}}$ is a composition of flows of semigroup network vector fields, it transforms $\gamma_f$ into another coupled cell network
\begin{align}\nonumber
& (e^{\gamma_{g_{n-1}}} \circ \ldots \circ e^{\gamma_{g_{1}}} )_*\gamma_f  =  \gamma_{\widetilde f}\, .
 \end{align}
%In particular, $e^{\gamma_b}$ transforms the coupled cell network $\gamma_f$ into the coupled cell network $\gamma_{\widetilde f}$.
It obviously holds that 
$$D_x\gamma_{\widetilde f}(0;0)=\gamma_{\widetilde f_{0,0}}=\overline A\, .$$ 
By Theorem \ref{normalformtheoremparameters}, it can now be arranged that the local normal form of $\gamma_{\widetilde f}$ near $(x;\lambda)=(0;0)$ is a network $\gamma_{\overline f}$ that commutes with the flow of $\overline A_S$, i.e.
$$\gamma_{\overline f}(e^{s\overline A_S}x) = e^{s\overline A_S}(\gamma_{\overline f}(x))\ \mbox{for all} \ s\in \R \, .$$
Restricted to $V_0^{n+1}$, this means that
$$\overline f(e^{i\omega_0s}x_j, \ldots, e^{i\omega_0s}x_{1}, 0, \ldots,0;\lambda) = e^{i\omega_0s}\overline f(x_j, \ldots, x_1, 0, \ldots, 0; \lambda)\  \mbox{for all}\ 1\leq j \leq n\, .$$
The latter is true if and only if (\ref{invariancecomplex}) holds.
\end{proof}

\begin{remark}
We remark that on the invariant subspace $V_0^{n+1}$ the normal form symmetry of $\gamma_{\overline f}$
reduces to the classical normal form symmetry $$(0, x_1, \ldots, x_n)\mapsto (0, e^{i\omega_0s}x_1, \ldots,  e^{i\omega_0s}x_n)$$ 
of the $1:1:\ldots:1$-resonant harmonic oscillator. One could also try to normalize $\gamma_{\overline f}$ further with respect to the nilpotent operator $\overline A_N$, see \cite{sanvermur}. We will not exploit this further freedom though.
\end{remark}
We are now ready to prove Theorem \ref{mainthm2}, using computations similar to those given in \cite{golstew}:

\begin{proofof}{Theorem \ref{mainthm2}} %We do the normal form computation around \(\lambda=0\).
%The kernel of \(L_{\mathfrak{a}_0}\) is spanned by functions
%\(q_{ij}=X_i^1 X_j^2-X_i^2 X_j^1\) and \(p_{ij}=X_i^1 X_j^1 +X_i^2 X_j^2\).
%For indeed
%\begin{eqnarray*}
%L_{\mathfrak{a}_0|_{\lambda=0}}q_{ij}&=&\sum_{k=1}^n \left(X_k^2 \frac{\partial}{\partial X_k^1}-X_k^1 \frac{\partial}{\partial X_k^2}\right)(X_i^1 X_j^2-X_i^2 X_j^1)
%\\&=&
%X_i^2 X_j^2 -X_i^1 X_j^1 -X_i^2 X_j^2 +X_i^1 X_j^1 =0
%\end{eqnarray*}
%and
%\begin{eqnarray*}
%L_{\mathfrak{a}_0|_{\lambda=0}}p_{ij}&=&\sum_{k=1}^n \left(X_k^2 \frac{\partial}{\partial X_k^1}-X_k^1 \frac{\partial}{\partial X_k^2}\right)(X_i^1 X_j^1+X_i^2 X_j^2)
%\\&=&
%X_i^2 X_j^1 -X_i^1 X_j^2 +X_i^1 X_j^2 -X_i^2 X_j^1 =0.
%\end{eqnarray*}
%Futhermore the vectors \(X_i^1 \frac{\partial}{\partial X_j^1}+X_i^2 \frac{\partial}{\partial X_j^2}\)
%and  \(X_i^2 \frac{\partial}{\partial X_j^1}-X_i^1 \frac{\partial}{\partial X_j^2}\)
%commute with \(\mathfrak{a}_0|_{\lambda=0}e\).
%$\alpha(\lambda) := \alpha_1(\lambda)+ i\alpha_2(\lambda)$, the map $\bar{\mathfrak{a}}_1(\lambda)$ into $\beta(\lambda)=\beta_1(\lambda)+i\beta_2(\lambda)$ and 
%Equation (\ref{invarianceatxis0}) moreover transforms into
In this proof, we shall use the normal form symmetry (\ref{invariancecomplex}). In fact, applied to a monomial 
$$\overline f(X_0, \ldots, X_{n-1},0; \lambda)=X_0^{\beta_0}\cdots X_{n-1}^{\beta_{n-1}}\cdot\overline{X}_0^{\gamma_0}\cdots \overline{X}_{n-1}^{\gamma_{n-1}}\cdot \lambda^{\delta}$$
 equation (\ref{invariancecomplex}) yields the restriction that $\sum_{j=0}^{n-1} (\beta_j -\gamma_j)=1$. Hence, the general polynomial or smooth $\overline f(X;\lambda)$ that satisfies (\ref{invariancecomplex}) must be of the form
$$\overline f(X_0, \ldots, X_{n-1}, 0; \lambda) = \sum_{j=0}^{n-1} F_j(\left. X_k{\overline X}_l \right|_{k,l=0, \ldots, n-1}; \lambda)X_j$$
for certain polynomial or smooth complex-valued functions $F_j$ of the parameter and of the complex invariants 
$$X_k{\overline X}_l = (X_k^1X_l^1+X_k^2X_l^2) + i(X_k^2X_l^1-X_k^1X_l^2)\ \mbox{with}\ 0\leq k,l\leq n-1\, .$$
%Let us denote for $k,l=0,\ldots, n-1$ by $p_{kl}$ and $q_{kl}$ the real and imaginary part of the complex invariant 
%$X_k\overline X_l=(X_k^1+iX_k^2)(X_l^1-iX_l^2)$, i.e. 
%$$p_{kl}+iq_{kl} = (X_k^1X_l^1+X_k^2X_l^2) + i(X_k^2X_l^1-X_k^1X_l^2)\, .$$
%Note that $p_{kk}=|X_k|^2=(X_k^1)^2+(X_k^2)^2$ and $q_{kk}=0$. Then (\ref{invariancecomplex}) implies that 
Moreover, the assumptions of the theorem imply that the first two of these functions Taylor expand as follows:
\begin{align}
F_0(\ldots, X_k{\overline X}_l, \ldots; \lambda) &= i \omega_0+\alpha \lambda + C |X_0|^2 \nonumber  \\ 
& + \mathcal{O}\left(|X_0|^4 + |\lambda|\cdot |X_0|^2 + |\lambda|^2 + ||(X_1, \ldots, X_n)||^2 \right) \, , \nonumber \\ \nonumber 
F_1(\ldots, X_k{\overline X}_l, \ldots;\lambda) & = \beta +\mathcal{O}(||(X_0, \ldots, X_n)||^2 + |\lambda|)\, . 
\end{align}
Here, we defined $C:=\frac{1}{3}\frac{\partial^3{\overline f}(0;0)}{\partial ({\rm Re}\, X_0)^3}\in \C$, for which it holds by assumption that ${\rm Re}\, C \neq 0$. Moreover, $\alpha=\alpha_1+i\alpha_2\in \C$ and $\beta=\beta_1+i\beta_2\in \C$
are those numbers for which
\begin{align}
\bar{\mathfrak{a}}_0(\lambda)&=\left(\begin{array}{rr} \alpha_1 \lambda + \mathcal{O}(|\lambda|^2)  & -\omega_0 -\alpha_2\lambda+ \mathcal{O}(|\lambda|^2) \\ \omega_0 + \alpha_2\lambda + \mathcal{O}(|\lambda|^2) & \alpha_1 \lambda +\mathcal{O}(|\lambda|^2)\end{array}\right) \ \mbox{and} \nonumber \\ \nonumber
 \bar{\mathfrak{a}}_1(\lambda)&=\left(\begin{array}{rr} \beta_1+ \mathcal{O}(|\lambda|) & - \beta_2+ \mathcal{O}(|\lambda|)\\ \beta_2+ \mathcal{O}(|\lambda|) &\beta_1+ \mathcal{O}(|\lambda|) \end{array}\right)\, 
\, .
\end{align}
In particular it holds that 
\begin{align}
\alpha_1 & = \frac{1}{2}\left. \frac{d}{d\lambda}\right|_{\lambda=0} \!\!  {\rm tr} \, \bar{\mathfrak{a}}_0(\lambda) = \frac{1}{2}\left. \frac{d}{d\lambda}\right|_{\lambda=0} \!\! {\rm tr} \, \mathfrak{a}_0(\lambda)\neq 0 \ \mbox{and that} \nonumber \\ \nonumber 
\beta_1 & = \frac{1}{2}{\rm tr}\, \bar{\mathfrak{a}}_1(0)  = \frac{1}{2}{\rm tr}\, \mathfrak{a}_1(0) \neq 0\, .
\end{align} 

\noindent Here, we have used that the trace of $\bar{\mathfrak{a}}_0(\lambda)=\mathfrak{a}_0(\lambda)-[\mathfrak{a}_0(0), \mathfrak{b}_0(\lambda)]$ is equal to that of $\mathfrak{a}_0(\lambda)$ and the trace of $\bar{\mathfrak{a}}_1(\lambda) = \mathfrak{a}_1(\lambda) - [\mathfrak{a}_0(0), \mathfrak{b}_1(\lambda)]$ is equal to that of $\mathfrak{a}_1(\lambda)$.

Using this information, we will now try to find the relative equilibria of the normal form. To this end, we set $x_0(t)=x_0^*(t):= 0$ and solve for the periodic solutions of (\ref{network}) by solving for $j=1, \ldots, n$ the equations $\dot x_j=f(x_j, x_{j-1}, \ldots, x_1, x_0, \ldots, x_0; \lambda)=  0$ consecutively.

To find $x_1(t)$, we solve  
\begin{align}\nonumber
\frac{dx_1}{dt}& =f(x_1, 0, \ldots, 0; \lambda) = F_0(|x_1|^2, 0;\lambda)x_1 = \\ \label{hopfnormalform} 
&\left( i\omega_0+\alpha \lambda + C|x_1|^2 + \mathcal{O}\left(|x_1|^4+|\lambda|\cdot |x_0|^2 + |\lambda|^2\right)\right) x_1\, .
\end{align}
Equation (\ref{hopfnormalform}) is the normal form of the ordinary Hopf bifurcation. Its relative equilibria are found by making an Ansatz $x_1(t) = B_1 e^{i\omega t}$ for $B_1\in \C$ and $\omega\in \R$. Ignoring the solution $B_1=0$, this Ansatz leads to a complex equation in $|B_1|^2$ and $\omega$:
\begin{align}\label{hopfnormalformim}
i\omega = i\omega_0 + \alpha \lambda + C|B_1|^2 + \mathcal{O}(|B_1|^4 + |\lambda|\cdot|B_1|^2+|\lambda|^2) \, .
\end{align}
The real part of this equation 
\begin{align}
\label{real}
0=\alpha_1\lambda + {\rm Re}\, C |B_1|^2 + \mathcal{O}(|B_1|^4 + |\lambda|\cdot |B_1|^2 + |\lambda|^2)
\end{align}
can only be solved for small values of $\lambda$ and for small but nonnegative values of $|B_1|^2$ if $$\alpha_1\lambda/{\rm Re}\, C < 0\, .$$ 
This criterion determines the sign of $\lambda$ and thus whether the bifurcation is subcritical or supercritical. Thus, from now on we choose the sign of $\lambda$ so that $\alpha_1\lambda/{\rm Re}\, C < 0$. 

Because there is no restriction on the argument of $B_1$, let us try to find $B_1=|B_1|>0$ real. Anticipating that $|B_1|$ will be a smooth function of $|\lambda|^{\frac{1}{2}}$, we define $\mu := |\lambda|^{\frac{1}{2}}$ and $|B_1| =: \mu Z_1$. Then equation (\ref{real}) becomes a smooth equation in $Z_1$ and $\mu$ of the form 
$$\mu^2 \left( |\alpha_1| - |{\rm Re}\, C| Z_1^2\right) + \mathcal{O}(\mu^4)= 0\, .$$
Dividing this equation by $\mu^2$, we see that we need to solve an equation of the form
$$h(Z_1, \mu) =  |\alpha_1| - |{\rm Re}\, C| Z_1^2 + \mathcal{O}(\mu^2) = 0\, $$
for some smooth function $h(Z_1, \mu)$. Let $z_1:=\sqrt{|\alpha_1/\rm Re\, C|}>0$ so that $z_{1}$ is the positive solution to the equation $h(Z_{1}, 0) = 0$. Then clearly $D_{Z_{1}}h(z_{1},0)= -2|{\rm Re}\, C| z_{1}\neq 0$ and thus by the implicit function theorem there exists a smooth function $Z_{1}^*:[0, \mu_0)\to\R_{> 0}$ of the form $Z_1^*(\mu) = z_{1}+\mathcal{O}(\mu)$ that solves $h(Z_1^*(\mu), \mu)=0$. Correspondingly, 
$$B_1(\lambda)=|B_1(\lambda)| = B_1^*(|\lambda|^{\frac{1}{2}}):=|\lambda|^{\frac{1}{2}}Z_1^*(|\lambda|^{\frac{1}{2}}) = \sqrt{\left| \alpha_1 \lambda/{\rm Re}\, C \right|} + \mathcal{O}(|\lambda|) \sim \sqrt{|\lambda|} = |\lambda|^{\kappa_1}\, .
$$
is a branch of solutions to equation (\ref{real}). %It holds that $B_1^*:[0, |\lambda_0|^{\kappa_1})\to\R_{\geq 0}$ is a smooth function of $|\lambda|^{\kappa_1}$ that satisfies $B_1^*(0)=0$ and $\lim_{|\lambda|\downarrow 0} \frac{B_1^*(|\lambda|^{\kappa_1})}{|\lambda|^{\kappa_1}} =  \sqrt{\left| \alpha_1/{\rm Re}\, C \right|} > 0$. 
%In this way we find a smooth branch of solutions
%\begin{align} \label{norm}
%& B_1=B_1(\lambda) = B_1^*(|\lambda|^{\kappa_1}) =\sqrt{\left| \alpha \lambda/{\rm Re}\, C \right|} + \mathcal{O}(|\lambda|) \sim \sqrt{|\lambda|} = |\lambda|^{\kappa_1}\, .
%\end{align}

The imaginary part of equation (\ref{hopfnormalformim}) immediately gives that the frequency $\omega(\lambda)$ is a smooth function of $\lambda$, given by
\begin{align}
\label{arg} & \omega(\lambda) = \omega_0 + \alpha_2 \lambda + {\rm Im}\, C|B_1(\lambda)|^2 + \mathcal{O}(|\lambda|^2)\, .
\end{align}
This finishes the analysis of equation (\ref{hopfnormalform}).

Next, assume as induction hypothesis that we have found for $1\leq j\leq n-1$, either for $\lambda \in (-\lambda_0, 0]$ or for $\lambda\in [0,\lambda_0)$, solutions 
$$x_0(t)\equiv 0, x_1(t)=B_1(\lambda)e^{i\omega(\lambda)t}, \ldots, x_{j}(t)=B_{j}(\lambda)e^{i\omega(\lambda)t} $$ 
of the equations $\dot x_0 = f(x_0, \ldots, x_0;\lambda)$, $\ldots$, $\dot x_{j}=f(x_{j}, \ldots, x_0;\lambda)$ so that for all $1\leq i \leq j$, $B_i(\lambda)$ is a smooth function of $|\lambda|^{\kappa_i}$, i.e. $B_i(\lambda)=B_i^*(|\lambda|^{\kappa_i})\sim |\lambda|^{\kappa_i}$ for some smooth function $B_i^*: [0, \lambda_0^{\kappa_i})\to \C$. We already proved this induction hypothesis for $j=1$.

Then by solving $\dot x_{j+1}=f(x_{j+1}, \ldots, x_0;\lambda)$ we shall try to obtain $x_{j+1}(t)$, that is we solve
$$\frac{dx_{j+1}}{dt} = \sum_{i=0}^{n-1}F_i( \ldots; \lambda)x_{{\rm max}\{j+1-i,0\}}  = ( i\omega_0 + \alpha \lambda+ C|x_{j+1}|^2)x_{j+1} + \beta x_{j} + \mathcal{O}(|\lambda|^{\kappa_{j-1}}) \, .$$
Now the Ansatz $x_{j+1}(t)=B_{j+1} e^{i\omega(\lambda)t}$ leads to the equation
\begin{align}\label{Bjeqn}
i\omega(\lambda)B_{j+1} = (i\omega_0 + \alpha \lambda + C|B_{j+1}|^2) B_{j+1} + \beta B_{j} + \mathcal{O}(|\lambda|^{\kappa_{j-1}})\, .
\end{align}
Anticipating that the solution $B_{j+1}$ will have amplitude $|\lambda|^{\kappa_{j+1}}$, let us at this point define the rescaled parameter $\mu:=|\lambda|^{\kappa_{j+1}} = |\lambda|^{\frac{1}{2\cdot 3^{j}}}$ and the rescaled unknown $B_{j+1}=: \mu Z_{j+1}$. Then it holds that $\mu^{3^{j-i+1}} = |\lambda|^{\kappa_{i}}$, which inspires us to define also the (smooth) rescaled functions $Z_1^*, \ldots, Z_j^*$ by
$$B_1^*(|\lambda|^{\kappa_1}) =: \mu^{3^{j}}Z_1^*(\mu^{3^{j}}), \ldots, B_j^*(\lambda^{\kappa_{j}})=:\mu^3Z_j^*(\mu^3)\ \mbox{for} \ \mu=\lambda^{\kappa_{j+1}} \, .$$ 
Then, because $\omega(\lambda)=\omega_0+\mathcal{O}(\lambda)$, equation (\ref{Bjeqn}) obtains the form
$$\mu^3\left( C |Z_{j+1}|^2Z_{j+1} + \beta Z_{j}(0) \right) + \mathcal{O}(\mu^4)=0\, .$$
Dividing by $\mu^3$ we now find that we need to solve an equation of the form 
$$h(Z_{j+1}, \mu)= C|Z_{j+1}|^2Z_{j+1} + \beta Z_{j}(0) + \mathcal{O}(\mu)\, .$$
We solve this equation as follows. First of all, there is a unique $z_{j+1}\in \C$ for which $h(z_{j+1},0)=C|z_{j+1}|^2z_{j+1} + \beta Z_{j}(0) =0$. It clearly holds that $z_{j+1}\neq 0$ because $\beta\neq 0$ and $Z_j(0) \neq 0$ by the induction hypothesis. As a consequence, $$D_{Z_{j+1}}h(z_{j+1},0): v \mapsto 2C|z_{j+1}|^2v+Cz_{j+1}^2{\overline v} \, $$ 
is invertible, since $\det D_{Z_{j+1}}h(z_{j+1},0) = 3|C|^2|z_{j+1}|^4\neq 0$. Thus, by the implicit function theorem, there exists a smooth function $Z_{j+1}^*: [0, \mu_0)\to \C$ so that $Z_{j+1}^*(0)= z_{j+1}\neq 0$ and $h(Z_{j+1}^*(\mu), \mu)=0$. Correspondingly, 
$$B_{j+1}(\lambda)=|B_{j+1}(\lambda)| = B_{j+1}^*(|\lambda|^{\frac{1}{2}}):=|\lambda|^{\kappa_{j+1}}Z_{j+1}^*(|\lambda|^{\kappa_{j+1}})  \sim  |\lambda|^{\kappa_{j+1}}\, 
$$
is a branch of solutions to (\ref{Bjeqn}).
%Real and imaginary parts of this equation now read
%\begin{align}
%0 & = ( \alpha_1(\lambda) + {\rm Re}\, C(\lambda) |B_j|^2 + ... )|B_j|^2 + {\rm Re}\, \beta(\lambda) B_{j-1}B_j\\
%\omega(\lambda) & = ( \alpha_2(\lambda) + {\rm Im}\, C(\lambda) |B_j|^2 + ... )|B_j|^2 + {\rm Im}\, \beta(\lambda) B_{j-1}B_j
%\end{align}
%In view of (\ref{arg}) this means that we must choose $\arg (B_j)$ so that ${\rm Im}\, \beta(\lambda) B_{j-1}B_j=0$ in order for the second equation to be fulfilled. This gives two possible choices for $\arg (B_j)$. Consequently, the first equation reduces to 
%\begin{align}
%0 & = ( \alpha_1(\lambda) + {\rm Re}\, C(\lambda) |B_j|^2 + ... )|B_j|  \pm \beta(\lambda) |B_{j-1}|
%\end{align}
%The conclusion is that 
%$$|B_j| = \sqrt[3]{\mp \beta(0)|B_{j-1}|/{\rm Re}\, C(0)} + ... \sim\sqrt[3]{\lambda^{\kappa_{j-1}}} = \lambda^{\kappa_j}\, .$$
%In particular, this means that there was only one choice of $\arg (B_j)$ that makes it possible to find a nonnegative $|B_j|$, i.e. $B_j$ is unique.
This finishes the induction and proves the existence of the Hopf branch
$$x_0(t)\equiv 0, x_1(t) = B_1(\lambda)e^{i\omega(\lambda)t}, \ldots, x_n(t)= B_n(\lambda)e^{i\omega(\lambda)t} \, .$$
The remaining branches in the statement of the theorem exist by symmetry.

Finally, we consider the linearization of the normal form flow around the periodic solution on the $r$-th branch. Thus, we perturb our relative equilibrium by substituting into the normal form equations of motion
\begin{align}
x_0(t)=\varepsilon y_0(t), & \ldots, x_{r-1}(t)=\varepsilon y_{r-1}(t) \nonumber \\ \nonumber 
x_r(t)=(B_1(\lambda)+\varepsilon y_r(t))e^{i\omega(\lambda)t}, & \ldots, x_n(t)=(B_{n-r+1}(\lambda)+\varepsilon y_n(t)) e^{i\omega(\lambda)t}\, .
\end{align} 
This yields that $\dot y = M(\lambda) y+\mathcal{O}(\varepsilon)$,  the linearization matrix being of lower triangular form
\begin{align}\nonumber
M(\lambda) = \left( \begin{array}{ccccccc} 
\bar{\mathfrak{a}}_0(\lambda)+ \ldots + \bar{\mathfrak{a}}_n(\lambda) & 0 & & \hdots&  & & 0\\
* & \bar{\mathfrak{a}}_0(\lambda) &0 & &  & \hdots & 0\\
   & * & \ddots & &  & & \vdots \\
&  &  & \bar{\mathfrak{a}}_0(\lambda) & 0 & & 0\\
\vdots & \vdots & & * & \mathfrak{b}_1(\lambda) & 0& 0 \\
& & & & & \ddots & \vdots \\
* & * & & \cdots & & * & \mathfrak{b}_{n-r+1}(\lambda)
\end{array} \right)\, .
\end{align}
The stability type of the Hopf curve is thus determined by the maps on the diagonal. The first of these, 
$\bar{\mathfrak{a}}_0(\lambda)+ \ldots+ \bar{\mathfrak{a}}_n(\lambda)=\mathfrak{a}_0(0)+\bar{\mathfrak{a}}_1(0)+ \ldots + \bar{\mathfrak{a}}_n(0)+\mathcal{O}(|\lambda|)$, is hyperbolic because $\mathfrak{a}_0(0)+\ldots + \mathfrak{a}_n(0)$ is hyperbolic and has the same eigenvalues as $\mathfrak{a}_0(0)+ \bar{\mathfrak{a}}_1(0)\ldots + \bar{\mathfrak{a}}_n(0)$. 

The maps $\bar{\mathfrak{a}}_0(\lambda)=i\omega_0 + \alpha \lambda+\mathcal{O}(|\lambda|^2)$ ($r-1$ times) are hyperbolic for $\lambda\neq 0$ because $\alpha_1\neq 0$. Finally, the maps $\mathfrak{b}_1(\lambda), \ldots, \mathfrak{b}_{n-r+1}(\lambda)$ are given asymptotically by 
$$\mathfrak{b}_j(\lambda)v= 2C|B_j|^2v + CB_j^2\overline v + \mathcal{O}(|\lambda|^{\kappa_{j-1}}|v|)\, .$$
These maps are hyperbolic for $\lambda\neq 0$ because $\det \mathfrak{b}_j(\lambda) = 3|C|^2|B_j|^4+\mathcal{O}(|\lambda|^{2\kappa_j\kappa_{j-1}}) > 0$ and ${\rm tr}\, \mathfrak{b}_j(\lambda) = 4({\rm Re}\, C) |B_j|^2+ \mathcal{O}(|\lambda|^{\kappa_{j-1}})\neq 0$. Thus, the branches are hyperbolic. 
\end{proofof}
\begin{remark}
For any one of the Hopf branches given in Theorem \ref{mainthm2} to be stable, it is necessary that ${\rm Re}\, C<0$. In turn, this implies that 
the branches exists for $\lambda$ with $\alpha_1 \lambda>0$. For such $\lambda$ though, the eigenvalues of $\bar{\mathfrak{a}}_0(\lambda) = i\omega_0+\alpha\lambda + \mathcal{O}(|\lambda|^2)$ have positive real part. Thus, the only branch of periodic solutions that can possibly be stable is the branch with the least synchrony and the largest amplitude, i.e. the branch with asymptotics
$$x_0=0, x_1\sim |\lambda|^{\kappa_1}, \ldots, x_n\sim |\lambda|^{\kappa_n}\, .$$
Indeed, this branch is stable precisely when ${\rm Re} \, C<0$ and $\mathfrak{a}_0(0)+\ldots+\mathfrak{a}_n(0)$ only has eigenvalues with negative real parts. 
 \end{remark}
 
%\begin{remark}\label{rem:2}
%It would have been enough to prove the theorem for \(r=2\).
%The solutions for other \(r\) follow from the fact that the \(A\)-matrices are symmetries of the system
%according to \cite{CCN}.
%\end{remark}

%We remark that the adjoint representation of \(\Sigma\) is not irreducible,
%but that we can divide out the \(\sigma^{n-1}\) space.
%On the quotient we have an irreducible representation.
%This is equivalent to putting \(X_n=0\).
\bibliography{CoupledNetworks}
\bibliographystyle{amsplain}
\end{document}